\numberwithin{equation}{section}
\newcommand{\R}{\mathbb{R}}
\newcommand{\tM}{\tilde{\mathcal M}}
\newcommand{\tR}{\tilde{\mathcal R}}
\renewcommand{\{}{\left\lbrace}
\renewcommand{\}}{\right\rbrace}
\DeclareMathOperator{\diag}{diag}
\DeclareMathOperator{\id}{id}
\DeclareMathOperator{\Id}{Id}
\DeclareMathOperator{\inj}{inj}
\DeclareMathOperator{\dist}{dist}
\DeclareMathOperator{\sym}{sym}
\DeclareMathOperator{\Vol}{Vol}
\DeclareMathOperator{\Lip}{Lip}
\theoremstyle{plain}
\newtheorem{theorem}{Theorem}[section]
\newtheorem{lemma}[theorem]{Lemma}
\newtheorem{corollary}[theorem]{Corollary}
\newtheorem{proposition}[theorem]{Proposition}
\newtheorem{define}[theorem]{Definition}
\newtheorem{remark}[theorem]{Remark}
\begin{document}
\title{Scaling of the elastic energy of small balls for maps between manifolds with different curvature tensors}
\author{Milan Kr\"omer and Stefan M\"uller}
\maketitle

\tableofcontents

\section{Introduction}
Motivated by experiments and formal asymptotic expansions  in the physics literature \cite{aharoni16},
Maor and Shachar  \cite{maor_shachar19} studied the behaviour of a model elastic  energy of maps between manifolds with
incompatible metrics. For thin objects they analysed  the scaling of the minimal
elastic energy as a function of the thickness. In particular, 
they established the folllowing result.

\begin{theorem}[\cite{maor_shachar19}, Thm 1.1]  \label{th:h4_flat_target}  Let  $(\mathcal M, g)$ be an oriented $n$-dimensional 
Riemannian manifold. Let $p \in M$ and consider a small ball $B_h(p)$ around $p$. For a  map $u$ in the Sobolev space $W^{1,2}(B_h(p); \R^n)$ define the energy
\begin{equation}  \label{eq:energy_h_flat}
E_{B_h(p)}(u) =: \fint_{B_h(p)}  \dist^2(du, SO(g,e)) \, d\Vol_g
\end{equation}
where $SO(g,e)(p')$ denotes the set of orientation preserving isometries from $T_{p'} M$ to $\R^n$ (equipped with  the Euclidean metric $e$ and the standard
orientation) and  where the distance is taken with respect to the Frobenius norm for tensors in $\R^n \otimes T_p^*M$, see   \eqref{eq:dist_in_orthonormal} and \eqref{eq:dist_in_coordinates} below for explicit formulae. For a measure $\nu$ the average with respect to $\nu$ is denoted by $\fint_E f  \, d\nu = (\nu(E))^{-1} \int f \,  d\nu$.

For a tensor $\mathcal A \in T_p M \otimes (T_p^* M)^{\otimes 3}$ define a map $\mathcal B: T_p M \supset B_1(0) \to T_p M \otimes T_p^* M$
by $\mathcal B(X)(Y) = \mathcal A(X,Y,X)$ and an energy
\begin{equation}  \label{eq:energy_limit_A}
\mathcal I_\mathcal A := \min_{ f \in W^{1,2}(B_1(0); T_p M)}   \fint_{B_1(0)}  |\sym df 
-
\frac16 \mathcal B|^2 \, d\Vol_{g(p)}.
\end{equation}
Then 
\begin{equation}  \label{eq:convergence_flat_target}
\lim_{h \to 0} \frac{1}{h^4} \inf E_{B_h(p)} = \mathcal I_{\mathcal R(p)},
\end{equation}
where $\mathcal R(p)$ is the Riemann curvature tensor at $p$. 
\end{theorem}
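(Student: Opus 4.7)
The plan is to pass to normal coordinates around $p$, rescale the ball $B_h(p)$ to the unit ball via $y = x/h$, and linearize the distance to $SO(g,e)$ about the natural reference field $\sqrt{g(\cdot)}$. In normal coordinates $x^1,\dots,x^n$ the metric satisfies the classical expansion $g_{ij}(x) = \delta_{ij} - \tfrac13 R_{ikjl}(p)\, x^k x^l + O(|x|^3)$, whence its symmetric positive square root expands as $\sqrt{g(x)} = \Id - \tfrac16 R_{ikjl}(p)\, x^k x^l + O(|x|^3)$. Writing a competitor in the form $u(x) = u(0) + x + h^3 v(x/h)$ gives $du(hy) = \Id + h^2 dv(y)$, and since $SO(g(hy),e) = \{R\,\sqrt{g(hy)} : R \in SO(n)\}$, a minimization over $R$ close to $\Id$ (write $R = \Id + h^2 \tilde W + O(h^4)$ with $\tilde W$ antisymmetric) yields the pointwise expansion
\[
\dist^2\bigl(\Id + h^2 dv(y),\, SO(g(hy),e)\bigr) = h^4\,\bigl|\sym dv(y) + \tfrac16 \mathcal{B}(y)\bigr|^2 + o(h^4),
\]
which, up to the involution $v \leftrightarrow -v$, is exactly the integrand appearing in $\mathcal I_{\mathcal R(p)}$.

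For the upper bound, I take a minimizer $v^\ast$ of $\mathcal I_{\mathcal R(p)}$ (existence via Korn's inequality and direct methods on $W^{1,2}(B_1;T_pM)$ modulo infinitesimal rigid motions), approximate it by a Lipschitz function in $W^{1,2}$ to control remainder terms, and set $u(x) = x + h^3 v^\ast(x/h)$ in normal coordinates. Integrating the pointwise expansion and using that $d\Vol_g$ differs from Lebesgue by $1 + O(h^2)$ on $B_h(p)$ gives $h^{-4} E_{B_h(p)}(u) \to \mathcal I_{\mathcal R(p)}$.

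For the lower bound, I take an almost-minimizing sequence $u_h$ with $h^{-4} E_{B_h(p)}(u_h) \to \mathcal I_{\mathcal R(p)}$ and invoke a quantitative rigidity estimate of Friesecke--James--M\"uller type, adapted to the slowly varying reference field $\sqrt{g(\cdot)}$, providing $R_h \in SO(n)$ with
\[
\int_{B_h(p)} \bigl|du_h - R_h\,\sqrt{g(\cdot)}\bigr|^2\, dx \le C\, h^n\, E_{B_h(p)}(u_h) \le C h^{n+4}.
\]
Replacing $u_h$ by $R_h^{-1} u_h$ reduces to $R_h = \Id$, and then $v_h(y) := h^{-3}(u_h(hy) - u_h(0) - h y)$ is bounded in $W^{1,2}(B_1;\R^n)$ modulo constants. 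Extract a weak limit $v^\infty$; combine a pointwise bound of the form $\dist^2(\Id + h^2 dv_h,\, SO(g(hy),e)) \ge h^4 |\sym dv_h + \tfrac16 \mathcal B|^2 - r_h(y)$ (valid with $\fint_{B_1} r_h \to 0$ after a truncation of the set where $|dv_h|$ is large) with weak lower semicontinuity of the convex functional $v \mapsto \fint_{B_1} |\sym dv + \tfrac16 \mathcal B|^2$ to conclude
\[
\liminf_{h \to 0} h^{-4} E_{B_h(p)}(u_h) \ge \fint_{B_1}\bigl|\sym dv^\infty + \tfrac16 \mathcal B\bigr|^2 \ge \mathcal I_{\mathcal R(p)}.
\]

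The hardest step is the rigidity estimate with the spatially varying reference $\sqrt{g(\cdot)}$: the classical FJM theorem compares $du$ to a \emph{constant} rotation, whereas here the natural reference is point-dependent. One must either establish a version of FJM for a smooth target metric $g$ close to Euclidean, or iterate the flat estimate on subballs where $g$ is essentially constant and sum the resulting errors; both routes rely on controlling $\nabla\sqrt{g}$ via the curvature. A secondary nuisance is that without an $L^\infty$ bound on $dv_h$ the Taylor expansion of $\dist^2$ is not uniformly valid, and the standard remedy is a Lipschitz truncation of $dv_h$ splitting it into a uniformly bounded part (on which the expansion applies) and a tail of small $L^2$ mass.
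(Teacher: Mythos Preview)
Your strategy is correct and is essentially the one the paper uses (there for the more general curved-target Theorem~1.2, of which this statement is the special case $\tilde{\mathcal M}=\R^n$): pass to normal coordinates, rescale to $B_1$, use FJM rigidity to get a constant rotation and a bounded $W^{1,2}$ defect $v_h$, Taylor-expand $\dist^2(\cdot,SO(n))$ on the good set after a Lipschitz truncation, and conclude by weak lower semicontinuity; the upper bound via the explicit ansatz $u(x)=x+h^3 v^\ast(x/h)$ is likewise the paper's recovery construction.

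You misjudge where the difficulty lies, though. The ``rigidity estimate with spatially varying reference $\sqrt{g(\cdot)}$'' is \emph{not} the hard step and requires neither a curved-metric FJM nor an iteration on subballs. After rescaling one has $|g(hy)^{-1/2}-\Id|\le C h^2$ uniformly on $B_1$, and an elementary perturbation of the $SO(n)$-distance (this is exactly Proposition~4.2 in the paper, a two-line triangle-inequality computation) gives
\[
\dist\bigl(d\bar u_h(y),SO(n)\bigr)\;\le\;(1+Ch^2)\,\dist\bigl(d\bar u_h(y)\,g(hy)^{-1/2},SO(n)\bigr)+Ch^2 .
\]
Squaring and integrating, the energy bound $\fint_{B_1}\dist^2(d\bar u_h\,g^{-1/2},SO(n))\le C h^4$ immediately yields $\fint_{B_1}\dist^2(d\bar u_h,SO(n))\le C h^4$, and the \emph{standard} FJM theorem with a constant rotation applies directly. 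So the step you flag as hardest collapses to a one-line perturbation; the genuine technical content sits exactly where you put it second, namely the Lipschitz truncation needed to make the Taylor expansion of $\dist^2(\Id+h^2\,\cdot\,,SO(n))$ legitimate on a set of nearly full measure.

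One small correction: $v_h(y)=h^{-3}\bigl(u_h(hy)-u_h(0)-hy\bigr)$ uses the point value $u_h(0)$, which is undefined for $u_h\in W^{1,2}$ in dimension $n\ge 2$; subtract an average (or choose $c_h$ via Poincar\'e) instead, as the paper does.
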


In   \eqref{eq:energy_limit_A} the norm is the Frobenius norm of tensors in $T_p M \otimes T_p^*M$ and the symmetric 
part of a linear map $L: T_p M  \to T_p M$ is defined by $\sym L = \frac12 (L + L^T)$ where $L^T $ is the adjoint map given 
by $g(p)(L^T X, Y) = g(p)(X, LY)$.

In \cite{maor_shachar19} it is shown that  the quadratic quantity  $\mathcal I_{\mathcal R(p)}$ is actually induced by a scalar product and in particular
$I_{\mathcal R(p)} = 0$ if and only if $\mathcal R(p) =0$.
Recall that by Gauss' theorema egregium, a small ball $B_h(p)$ in $\mathcal M$ can be  mapped  into $\mathbb R^n$ with zero energy $E_{B_h(p)}$  if and only 
if $\mathcal R \equiv  0$ on $B_h(p)$. 

In local coordinates $\mathcal I_{\mathcal A}$ is given as follows. Let  $e_1, \ldots, e_n$   be any
$g(p)$-orthonormal basis of $T_p M$. Then 
\begin{equation}
\mathcal I_\mathcal A = \min_{ \bar f \in W^{1,2}(B_1(0); \R^n)} 
  \fint_{B_1(0)} \sum_{i,k=1}^n  \left(
    \frac12 \left(   \frac{\partial \bar f^i}{\partial x^k} +  
\frac{\partial \bar f^k}{\partial x^i} \right) 
- \frac16  \sum_{j,l=1}^n \mathcal A^i_{jkl} x^j x^l  \right)^2   \,    dx 
\end{equation}
where now $B_1(0)$ is the unit ball in $\R^n$ and 
\begin{equation}
\mathcal A^i_{jkl} = g(p)( e_i, \mathcal A(e_j, e_k, e_l)).
\end{equation}
The functions $f$ and $\bar f$ are related by the identity $\bar f^i(x) = g(p)(e_i, f(  \sum_{j=1}^n x^j e_j))$.

Based on Theorem~\ref{th:h4_flat_target} and heuristic reasoning in the physics literature, Maor and Shachar raise the question whether 
Theorem~\ref{th:h4_flat_target} 
can be generalized to 
non-flat targets with $\mathcal R$ replaced by the difference of the curvature tensors in the target and the domain \cite[Open question 1, p.\ 154]{maor_shachar19}.
Here we show that this is true if the difference of the curvature tensors is properly interpreted.

\begin{theorem}   \label{th:h4_compact_target}  Let $(\mathcal M, g)$ and $(\tM, \tilde g)$ be smooth oriented Riemannian manifolds and suppose that $\tM$
is compact. For $p \in \mathcal M$, $h > 0$ and 
a  map $u$ in the Sobolev space $W^{1,2}(B_h(p); \tM)$ define the energy
\begin{equation} \label{eq:energy_h}
E_{B_h(p)}(u) =: \fint_{B_h(p)}  \dist^2(du, SO(g,\tilde g)) \, d\Vol_g
\end{equation}
where $\dist(du,SO(g,\tilde g))(p')$ denotes the Frobenius distance in $T_{u(p')} \tM \otimes T_{p'}^*M$ of $du(p')$ from
the   set of orientation preserving isometries from $T_{p'} M$ to 
$T_{u(p')} \tM$.
Then 
\begin{equation}  \label{eq:convergence_general_target}
\lim_{h \to 0} \frac{1}{h^4} \inf E_{B_h(p)} = \min_{q \in \tM}  \, \, \min_{Q \in SO(T_p \mathcal M, T_q \tM)} \mathcal I_{\mathcal R(p)- \tilde{\mathcal R}^Q},
\end{equation}
where $\tilde {\mathcal{R}}^Q$ is the pullback of the the Riemann curvature tensor $\tR(q)$ under $Q$, i.e.,
\begin{equation} 
 \tR^Q(X,Y,Z) = Q^{-1} \tR(q)(QX, QY, QZ)
\end{equation}
and where $SO(T_p \mathcal M, T_q \tM)$ denotes the set of orientation preserving isometries from $T_p \mathcal M$ (equipped with the metric $g(p)$) and
$T_q(\tM)$ (equipped with the metric $\tilde g(q)$).
\end{theorem}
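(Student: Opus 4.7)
The plan is to establish matching upper and lower bounds for $h^{-4}\inf E_{B_h(p)}$ by reducing, via normal coordinates at $p$ and at a suitable target point $q\in\tM$, to the flat-target computation of Theorem~\ref{th:h4_flat_target}. For the upper bound, fix $q\in\tM$, $Q\in SO(T_p\mathcal M,T_q\tM)$, and pick a $g(p)$-orthonormal basis $(e_i)$ of $T_p\mathcal M$ with image $(Qe_i)$ a $\tilde g(q)$-orthonormal basis of $T_q\tM$. The induced normal charts $\psi,\tilde\psi$ yield the standard expansions
\[
g_{ij}(x)=\delta_{ij}-\tfrac{1}{3}\mathcal R(p)_{ikjl}\,x^k x^l+O(|x|^3),\qquad
\tilde g_{ij}(y)=\delta_{ij}-\tfrac{1}{3}\tR^Q{}_{ikjl}\,y^k y^l+O(|y|^3),
\]
the second reflecting that the components of $\tR(q)$ in the basis $(Qe_i)$ coincide with those of $\tR^Q$ in $(e_i)$. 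Take the competitor $u_{h,f}=\tilde\psi\circ(\id+h^3 f(\cdot/h))\circ\psi^{-1}$ with $f\in W^{1,2}(B_1(0);\R^n)$ a near-minimizer of $\mathcal I_{\mathcal R(p)-\tR^Q}$. After the rescaling $x=hy$, the orthonormal-frame representation of $du_{h,f}$ equals $\tilde G^{1/2}dF\,G^{-1/2}$, whose deviation from $\Id$ is $h^2\bigl(df(y)+\tfrac{1}{2}(\tilde A-A)(y)\bigr)+O(h^3)$, with symmetric quadratic matrices $A,\tilde A$ encoding $\mathcal R(p),\tR^Q$. Since $\dist^2(M,SO(n))=|\sym(M-\Id)|^2+O(|M-\Id|^3)$ near $\Id$, and the symmetries of the Riemann tensor identify the quadratic contribution with the tensor $\mathcal B$ of Theorem~\ref{th:h4_flat_target}, averaging gives $E_{B_h(p)}(u_{h,f})=h^4(\mathcal I_{\mathcal R(p)-\tR^Q}+o(1))$; infimizing over $q,Q$ yields the $\limsup$.

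For the lower bound, pick an almost-minimizer $u_h$ with $E_{B_h(p)}(u_h)\leq \inf E_{B_h(p)}+\eps_h h^4$ and $E_{B_h(p)}(u_h)\leq Ch^4$. The smallness of the energy forces $\operatorname{diam}u_h(B_h(p))=O(h)$, so by compactness of $\tM$ a subsequence satisfies $u_h(p)\to q\in\tM$. A quantitative rigidity estimate for Sobolev maps into a Riemannian manifold---the manifold-target analogue of Friesecke--James--Müller, obtained by working in a normal chart at $u_h(p)$ where $\tilde g$ appears as an $O(h^2)$ perturbation of the Euclidean metric---then produces an orientation-preserving isometry $Q_h\in SO(T_p\mathcal M,T_{u_h(p)}\tM)$ with
\[
\fint_{B_h(p)}|du_h-Q_h|^2\,d\Vol_g\leq C\,E_{B_h(p)}(u_h)\leq C'h^4,
\]
and parallel transport to $T_q\tM$ along short geodesics, combined with a further subsequence, gives $Q_h\to Q\in SO(T_p\mathcal M,T_q\tM)$.

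Choosing the normal frames so that $Q_h$ is represented by $\Id\in\R^n$, define $v_h(y):=h^{-3}\bigl(\tilde\psi_h^{-1}(u_h(\psi(hy)))-hy\bigr)$ on $B_1(0)$. The rigidity bound translates into a uniform $W^{1,2}(B_1(0);\R^n)$-bound on $v_h$, so after subtracting a constant one has $v_h\rightharpoonup v$ weakly. The same Taylor expansion as in the upper bound, now with $\tilde g$ evaluated at the actual image $u_h(x)$, produces
\[
E_{B_h(p)}(u_h)=h^4\fint_{B_1(0)}\bigl|\sym dv_h(y)-\tfrac{1}{6}(\mathcal R(p)-\tR^{Q_h})(y,\cdot,y)\bigr|^2 dy+o(h^4),
\]
and weak $L^2$-lower semicontinuity combined with $Q_h\to Q$ yields $\liminf_{h\to 0} h^{-4} E_{B_h(p)}(u_h)\geq \mathcal I_{\mathcal R(p)-\tR^Q}\geq \min_{q,Q}\mathcal I_{\mathcal R(p)-\tR^Q}$.

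The principal obstacle is the manifold-target rigidity estimate, since the approximating isometry $Q_h$ must live between tangent spaces at two \emph{a priori} unknown and distinct points of $\mathcal M$ and $\tM$, and the constants in the estimate must be uniform in $h$. A second delicate point is the control of the $o(h^4)$ remainder in the lower-bound expansion: to replace $\tilde g(u_h(x))$ by $\tilde g(Q_h x)$ up to negligible error, one combines the $W^{1,2}$-rigidity bound on $v_h$ (and the resulting $L^2$-control via Poincaré) with the bilinearity of the quadratic correction $\tilde A$, yielding $\fint_{B_h(p)}|\tilde g(u_h(x))-\tilde g(Q_h x)|^2 dx = O(h^8)$ and hence a harmless contribution to the energy; in high dimension this may need to be supplemented by a truncation argument to cover the failure of direct Sobolev embedding.
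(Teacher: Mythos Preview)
Your upper-bound construction is essentially the paper's recovery sequence and is fine in spirit, though note that for $f\in W^{1,2}$ in dimension $n\ge 2$ the composition $\tilde\psi\circ(\id+h^3 f(\cdot/h))$ need not be well-defined (since $f$ may be unbounded), and the pointwise Taylor expansion $\dist^2(M,SO(n))=|\sym(M-\Id)|^2+O(|M-\Id|^3)$ does not integrate to an $o(h^4)$ error without $L^\infty$ control on $df$. The paper fixes this by first approximating $f$ by Lipschitz maps $\check f_k$ with $\Lip\check f_k\le h_k^{-1}$; you should do the same.

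The lower bound, however, has a genuine gap at its very first step. You assert that ``the smallness of the energy forces $\operatorname{diam}u_h(B_h(p))=O(h)$'', and everything afterwards---the use of a single normal chart $\tilde\psi_h$ on the target, the definition $v_h(y)=h^{-3}(\tilde\psi_h^{-1}(u_h(\psi(hy)))-hy)$, the Taylor expansion of $\tilde g(u_h(x))$---relies on this. But the claim is false for $W^{1,2}$ maps in dimension $n\ge 2$: the bound $E_{B_h(p)}(u_h)\le Ch^4$ controls only $\|\dist(du_h,SO(g,\tilde g))\|_{L^2}$, not $\|du_h\|_{L^\infty}$, and a Sobolev map may have arbitrarily large gradient on a set of small measure. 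In particular $u_h$ need not be continuous and its image need not lie in any single coordinate chart of $\tM$, so the expression $\tilde\psi_h^{-1}\circ u_h$ is not even defined. Your closing remark that ``in high dimension this may need to be supplemented by a truncation argument'' drastically understates the issue: without such a truncation none of the lower-bound computations make sense.

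This is precisely the obstacle the paper isolates as the main new difficulty for non-flat targets, and its resolution is the core technical contribution: one replaces $u_k$ by a Lipschitz approximation $w_k$ (obtained from the Liu/Friesecke--James--M\"uller truncation lemma applied after a Nash embedding of $\tM$ into $\R^s$, followed by projection back to $\tM$) satisfying $\Lip w_k\le C$ and $\mu(\{u_k\ne w_k\})\le Ch_k^4\,\mu(B_{h_k}(p))$. For the Lipschitz map $w_k$ the image \emph{is} contained in a ball of radius $O(h_k)$, the normal-coordinate expansions and the rigidity estimate go through, and the exceptional set $\{u_k\ne w_k\}$ is small enough that discarding it costs only $O(h_k^4)$ in the energy. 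The paper then builds the entire $\Gamma$-convergence framework (including the very notion of convergence $u_k\to(q,Q,f)$) around these Lipschitz approximations. Your outline becomes correct once you insert this step and carry all subsequent computations out for $w_k$ rather than $u_k$.
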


The result can be extended to noncompact targets $\tilde{\mathcal{M}}$, if $\tilde{\mathcal{M}}$ satisfies a uniform regularity condition near infinity and
if the minimum over $q$  is replaced by an infimum, see Corollary~\ref{co:noncompact_target} below.
In particular the result holds for the hyperbolic space $\mathbb H_K$ of
constant curvature $K < 0$,  and we recover Theorem~\ref{th:h4_flat_target}  if we take $\tM = \R^n$. 

The heuristic argument for the validity of both theorems  is simple. In normal coordinates  (i.e. those induced by the exponential map) 
in a neighbourhood of $p \in \mathcal M $ and $q = u(p) \in \tilde{\mathcal{M}}$
the metrics behave like $g(v) = \Id + q(v) + \mathcal O(|v|^3)$ and $\tilde g(v) =   \Id + \tilde q(v) +  \mathcal O(|v|^3)$
where $q$ and $\tilde q$ are homogeneous of degree $2$ and determined by the Riemann curvature tensors at $p$ and $q$,
respectively, see \eqref{eq:metric_normal} below.
This suggests to look for approximate minimizers  of the elastic energy of the form
\begin{equation}  u(\exp_p X) =  \exp_q (Q( X + h^3 f(X/h))  \end{equation}
with $Q \in SO(T_p \mathcal M, T_q \tM)$ and $f : T_p M \to T_p M$. 
Then $d (\exp_q^{-1} \circ  u  \circ \exp_p)=  Q (\id + h^2 df)$ and optimization over $f$  and $Q$ should yield the asymptotically optimal
behaviour of the energy. 

Similar to the reasoning in  \cite{maor_shachar19},  the proof of Theorem~\ref{th:h4_compact_target}
 relies on a corresponding $\Gamma$-convergence result where the notion of  convergence
of sequences of maps $u_h : B_h(p) \to  \tilde{\mathcal{M}}$ incorporates a blow-up which reveals the  map $f$. 
One key additional difficulty for non-flat targets is that maps $u_h$ with small energy need not be continuous. Thus $u_h(B_h(p))$ may
not be contained in a single chart of  $  \tilde{\mathcal{M}}$ and we cannot rely on Taylor expansion in exponential coordinates in the target.

To overcome this difficulty,  we define a new notion of convergence of the maps $u_h$ which is based on Lipschitz
approximations and exploits the fact that Sobolev maps agree with Lipschitz maps on a large subset.
The idea to use Lipschitz approximation to treat manifold-valued maps has already been used in 
\cite[pp. 390--391]{kupferman_maor_asaf_reshetnyak}. The use of Lipschitz approximations to define a suitable notion of convergence after blow-up
seems, however, to be new. We believe that this approach might be useful for other problem involving manifold-valued maps, too.

\bigskip

The remainder of this paper is organized as follows. 
In Section~\ref{se:preliminaries} we introduce the relevant notation and definitions, 
in particular the definition of Sobolev maps with values in a Riemannian manifold. 
In Section~\ref{se:convergence} we introduce a new notion of convergence based
on blow-ups of Lipschitz approximations and show that the limit is well-defined, and
in particular does not depend on which Lipschitz approximation is used. 
Based on this convergence notion we establish compactness and $\Gamma$-convergence
results in Section~\ref{se:gamma}. Finally, in Section~\ref{se:energy},   we deduce Theorem~\ref{th:h4_compact_target},
 i.e.  convergence 
of the rescaled energy,   in the usual way from compactness and $\Gamma$-convergence.

\section{Preliminaries}  \label{se:preliminaries}

Here we recall three  facts: the notion of Sobolev spaces of  maps with values in a  Riemannian manifold,
the expression of  $\dist(du, SO(g, \tilde g))$ in local coordinates, and the 
expansion of the metric near the origin in normal  coordinates. 

For the rest of this paper  $(\mathcal M, g)$,  $(\tilde{\mathcal{M}}, \tilde g)$ will always denote   smooth oriented Riemannian $n$-dimensional  manifolds. 
We often drop $g$ or $\tilde g$ in the notation. We denote by $d_g$ the inner metric of $\mathcal M$, i.e. $d_g(p, p')$ is given by the infimum of  the length
of curves connecting $p$ and $p'$.

The Sobolev spaces $W^{1,p}(\mathcal M)$ of functions $v: \mathcal M \to \mathbb R$
are defined by using a partition of unity and local charts. The definition of Sobolev maps with values in $\tilde{\mathcal{M}}$ is more subtle, since
Sobolev maps need not be continuous and hence the image of a small ball in $\mathcal M$ may not be contained in a single chart of  $\tilde{\mathcal{M}}$.
To overcome this difficulty, we use the fact  that $\tilde{\mathcal{M}}$ can be isometrically embedded in some $\mathbb R^s$ if $s$ is chosen sufficiently large. 
We thus may assume that $\tilde{\mathcal{M}} \subset \mathbb R^s$ and  for an open subset $U \subset \mathcal M$ we define
\begin{equation}  \label{eq:define_sobolev}
W^{1,p}(U; \tilde{\mathcal{M}}) = \{ u \in W^{1,p}(\mathcal M; \mathbb R^s) :  \, \,  \text{$u(x) \in \tilde{\mathcal{M}}$ for a.e. $x \in U$}  \}.
\end{equation}
It is easy to check that  for a map $u \in W^{1,p}(U; \tilde{\mathcal{M}})$  the weak differential $du$ (obtained by viewing $u$ 
as a map with values in $\mathbb R^s$) satisfies 
$\mathrm{range} (du(x)) \subset T_{u(x)} \tilde{\mathcal{M}}$ for a.e.\ $x \in U$. 

Equivalently,  one  can define the Sobolev space $W^{1,p}(U; \tilde{\mathcal{M}})$ by  viewing  $\tilde{\mathcal{M}}$
 as a metric space with the inner metric $d_{\tilde g}$ 
and use the theory  of Sobolev spaces with values in a metric space, see, for example, 
 \cite{HKST} and \cite{reshetnyak97}.
 Alternatively, one can use the intrinsic definition Sobolev maps with values in manifolds, introduced
 by Convent and van Schaftingen \cite{CvS16}.

\medskip

We denote by $\R^{n \times n}$ the space of real $n \times n$ matrices and by $O(n) = \{ A \in \R^{n \times n} : A^T A = \Id\}$ and $SO(n) =
\{ A \in O(n) : \det A = 1\}$ the orthogonal and special orthogonal group. On $\R^{n \times n}$ we use the Frobenius norm given by 
\begin{equation}
|A|^2 = \tr A^T A = \sum_{i,j=1}^n A_{ij}^2.
\end{equation}
This norm is invariant under the left and right action of $O(n)$:
\begin{equation} \label{eq:frobenius_invariant}
| R A Q| = |A|  \quad \forall R, Q \in O(n).
\end{equation}
For a (weakly) differentiable map $u$ from an open  subset of $(\mathcal M, g)$ to $(\tM, \tilde g)$ we define
$\dist(du, SO(g, \tilde g))$ as follows. For $p \in \mathcal M$ let $\underline V = (V_1, \ldots V_n)$ be a positively oriented
orthonormal basis of $(T_p \mathcal M, g(p))$, let $\tilde{ \underline V}$ be a positively oriented orthonormal basis
of $T_{u(p)} \tM$ and let $A= (du)_{\underline V, \tilde{\underline V}}$ be the matrix representation of $du(p)$ in these bases, i.e., 
$du(p) V_j =  \sum_{i=1}^n A_{ij} \tilde V_i$. Then
\begin{equation}  \label{eq:dist_in_orthonormal}
\dist(du, SO(g, \tilde g)) := \min_{Q \in SO(n)} | (du)_{\underline V, \tilde{\underline V}} - Q|.
\end{equation}
In view of  \eqref{eq:frobenius_invariant},  the right hand side does not depend on the choice of (positively oriented) orthonormal bases. 
If $\underline X$ and $\tilde{\underline X}$ are general positively oriented bases and if we define matrices
$(g_{\underline X})_{ij} = g(p)(X_i, X_j)$ and $(\tilde g_{  \tilde{\underline X}})_{ij}  = \tilde g(u(p))(\tilde X_i, \tilde X_j)$ then
$V_i =  \sum_{j=1}^n (g_{\underline X})^{-1/2}_{ij} X_j$ and $\tilde V_i = \sum_{j=1}^N (\tilde g_{ \tilde{ \underline X}})^{-1/2}_{ij} X_j$ define orthonormal bases. 
Thus, if $(du)_{  \underline X, \tilde {\underline X}}$ is the matrix representation with respect to $\underline X$ and $\tilde{\underline X}$
we get    
\begin{equation}  \label{eq:dist_in_coordinates}
\dist(du, SO(g, \tilde g)) = \min_{Q \in SO(n)} |   \tilde g_{ \tilde{ \underline X}}^{1/2}  \,  \, (du)_{\underline X, \tilde{\underline X}}  \, \,   g_{\underline X}^{-1/2} \, 
 - Q|.
\end{equation}
In particular we see that  $\dist(du, SO(g, \tilde g))$ behaves natural under pullback. More precisely, if  $\mathcal N$ and $\tilde{\mathcal{N}}$ are oriented $n$-dimensional
manifolds and $\varphi: \mathcal N \to \mathcal M$, $\psi: \tilde{\mathcal{N}} \to \tM$ are smooth orientation-preserving  diffeomorphisms then
\begin{equation}  \label{eq:dist_general_basis}
\dist(du, SO(g, \tilde g)) = \dist (d(\psi^{-1} \circ u \circ \varphi), SO(\varphi^* g, \psi^* \tilde g))
\end{equation}
where $\varphi^*g$ denotes the pullback metric given by $\varphi^*g(x)(X,Y) = g(\varphi(x))(d\varphi X, d\varphi Y)$ and $\psi^*\tilde g$ is given by the analogous expression.

\medskip

Finally we recall the expansion of the metric in local coordinates. Let $p \in \mathcal M$,  let $\underline V = (V_1, \ldots, V_n)$ be an orthonormal basis of 
$(T_p \mathcal M, g(p))$, let $\imath_{\underline V}: \R^n \to T_p M$ be given by $\imath_{\underline V}(x) = \sum_{j=1}^n x^j V_j$,  and let $(e_1, \ldots, e_n)$ denote the
standard basis of $\R^n$.  Then\footnote{Some authors define the Riemann curvature tensor by
$\mathcal R'(W,U,V) = \mathcal R(U,V,W)$ where $\mathcal R(U,V,W)$ is given by  \eqref{eq:riemann_curvature}. 
Then $\mathcal R'(X,Y,X) = \mathcal R(Y,X,X) = - \mathcal R(X,Y,X)$ and thus
$\left((\exp_p \circ \imath_{\underline V})^* g\right)_{ik}(x) = \delta_{ik} - \frac13 \mathcal {R'}^i_{jkl}(p) x^j x^l + \mathcal O(|x|^3)$.}
\begin{equation}  \label{eq:metric_normal}
\left((\exp_p \circ \imath_{\underline V})^* g\right)_{ik}(x) 
:= \left((\exp_p \circ \imath_{\underline V})^* g\right)(x)(e_i, e_k) = \delta_{ik}
 +  \frac13 \mathcal R^i_{jkl}(p) x^j x^l + \mathcal O(|x|^3).
\end{equation}
where $\mathcal R$ is the Riemann curvature tensor, i.e., 
\begin{equation} \label{eq:riemann_curvature}
\mathcal R(U,V,W) = \nabla_U \nabla_V W - \nabla_V \nabla_U W - \nabla_{[U,V]} W
\end{equation}
and 
\begin{equation}  \label{eq:components_riemann}
\mathcal R^i_{jkl}(p) = g(p)(V_i, \mathcal R(p)(V_j, V_k, V_l)).
\end{equation}

\section{A new notion of convergence for  blow-ups}  \label{se:convergence}

In this section we introduce a notion of convergence of blow-ups of a sequence of maps $u_{h_k} : B_{h_k}(p) \to \tM$
which is based on a suitable approximation by Lipschitz maps. We show  in particular
that this notion of convergence does not depend on the precise choice of the approximation.

Let $p \in \mathcal M$. We set $B_h(p) = \{ p' \in \mathcal M : d_g(p, p') < h\}$ where $d_g$ is the inner metric induced by the Riemannian metric  $g$ on $\mathcal M$. 
In  $T_p \mathcal M$ we consider the balls $B_r(0) = \{ X \in T_p M : g(p)(X,X) < r^2\}$. Let $\inj(p)$ denote the injectivity radius, i.e., the supremum of
all $r > 0$ such that the exponential map $\exp_p$ is injective on $B_r(0)$. Then for $h < \inj(p)$ the exponential map is a smooth diffeomorphism
from $B_h(0) \subset T_p \mathcal M$ to $B_h(p) \subset \mathcal M$. 

Using a positively oriented  orthonormal frame $\underline V = (V_1, \ldots V_n)$ of $T_p M$  and the corresponding map $\imath_{\underline V}: \R^n \to T_p M$
given by $\imath_{\underline V}(x) = \sum_{j=1}^n x^j V_j$ we can identify maps $f : B_1(0) \subset T_p \mathcal M \to T_p \mathcal M$ with maps $\bar f : B_1(0) \subset \R^n \to \R^n$
by setting $\bar f = \imath_{\underline{V}}^{-1} \circ f \circ \imath_{\underline V}$. In this way we can define the Sobolev space 
$W^{1,2}(B_1(0),\mathbb{R}^n)$ with $B_1(0) \subset T_p \mathcal M$ and we introduce the following equivalence relation on that space
\begin{equation}   \label{eq:equivalence_relation} 
 f\sim g \qquad \text{if $f-g$ is affine and $D(f-g)$ is
skew-symmetric.}
\end{equation}
Here symmetry of $Df$ is defined using the scalar product $g(p)$. Equivalently, $Df$ is symmetric if and only if $D\bar f$ is symmetric as a map from $\R^n$ to  $\R^n$ with
respect to the standard Euclidean metric. 

For $p \in \mathcal M$ and $q' \in \tM$ we denote by $SO(T_p \mathcal M, T_{q'} \tM)$ the set of orientation preserving linear isometries from $T_p \mathcal M$ to $T_{q'} \tM$ (equipped with the metrics
$g(p)$ and $\tilde g(q')$, respectively). By $\mu$ we denote the standard measure on $\mathcal M$: $\mu(E) = \int_E d\Vol_g$. Recall that for a measure $\nu$ we denote the 
average with respect to $\nu$ by $\fint_E f \, d\nu = (\nu(E))^{-1} \int_E f \, d\nu$.

\begin{define}\label{def:define_convergence_notion}
Let $h_k>0$  with
$\lim_{k\rightarrow\infty}h_k=0$,    let $p \in \mathcal M$, and let $u_k$ be a sequence of   maps
in $W^{1,2}(B_{h_k}(p); \tilde{\mathcal{M}})$.
Let  $q \in \tM$, $Q\in SO(T_p \mathcal M, T_{q} \tM)$, and 
$f\in W^{1,2}(B_1(0),T_p \mathcal M)/ \! \!  \! \sim$ where $B_1(0) \subset T_p \mathcal M$. 

We say that $u_k$ converges to the triple $(q,Q,f)$, if  the following three conditions hold.
  \begin{enumerate}[(i)]
  \item $u_k$ converges to the constant map $q$ in measure, i.e.,
  \begin{equation} \label{eq:convergence_condition_1}
  \lim_{k \to \infty}   \frac{1}{ \mu(B_{h_k}(p))}
      \mu\left( \{
        x\in B_{h_k}(p):  d_{\tilde g}(u_k(x), q) \ge \delta
        \}  \right)=0
  \end{equation}
  for every $\delta > 0$;
  
  \item there exist Lipschitz maps
    $w_k:B_{h_k}(p)\rightarrow\tilde{\mathcal{M}}$ such that
    \begin{equation}\label{eq:uniform_lipschitz_condition}
      \sup_k\Lip w_k<\infty,
    \end{equation}
    \begin{equation}\label{eq:convergence_condition_2}
      \sup_k\frac{1}{h_k^4}\frac{1}{|B_{h_k}(p)|}
      \mu\left( \{
        x\in B_{h_k}(p):w_k(x)\neq u_k(x)
        \}
      \right)<\infty;
    \end{equation}
  \item Set
    \begin{equation}  \label{eq:define_qk}
      q_k:=\exp_q\left( \fint_{B_1(0)} (\exp_q^{-1}\circ w_k \circ \exp_p)(h_k X) \, d\Vol_{g(p)}(X) \right).
    \end{equation}

Then 
   there exist $Q_k\in SO(T_p \mathcal M, T_{q_k} \tM)$, 
   $c_k\in\mathbb{R}^n$,  and an element $\check{f}$ of the equivalence class $f$ such that
    $Q_k\rightarrow Q$ and the maps
    $f_k:B_1(0) \subset T_p M \to  T_p \mathcal M$ defined by
    \begin{equation} f_k(X):=\frac{1}{h_k^2}\{ Q_k^{-1}   \frac{1}{h_k}(\exp_{q_k}^{-1}\circ w_k \circ \exp_p)(h_k X) -  X-c_k \}  \label{eq:define_fk}
    \end{equation}
    satisfy   
    \begin{equation}  \label{eq:converge_fk_def} f_k\rightharpoonup \check{f}\quad\text{ in }
      W^{1,2}(B_1(0),T_p \mathcal M)
    \end{equation}
  \end{enumerate}
   We denote this convergence  by $u_k\rightarrow(q,Q,f)$.
  \end{define}

  \begin{remark}  \begin{enumerate}
  \item To see that the right hand sides of   
 \eqref{eq:define_qk} and \eqref{eq:define_fk} are  well defined for sufficiently large $k$  note 
  that it follows from \eqref{eq:convergence_condition_1}, \eqref{eq:uniform_lipschitz_condition}, and
   \eqref{eq:convergence_condition_2} that 
   \begin{equation}  \label{eq:uniform_convergence_wk}
   \lim_{k \to \infty} \sup_{p' \in B_{h_k}(p)} d_g(w_k(p'), q) = 0.
   \end{equation}  
   
   Hence, for large enough $k$, the set  $w_k(B_{h_k}(x))$ is contained in a ball around $q$
   on which $\exp_q^{-1}$ is defined and  a diffeomorphism.
      Moreover
    \eqref{eq:uniform_convergence_wk}  implies that 
    \begin{equation} \label{eq:convergence_qk}  \lim_{k \to \infty}  d_{\tilde g}(q_k, q) = 0
    \end{equation}  and thus $\exp_{q_k}^{-1} \circ w_k$ is 
   also well-defined for $k$ large enough. 
   
   \item The linear  maps $Q_k$ have different target spaces. To define the convergence $Q_k \to Q$  one uses a local
   trivialization of the tangent bundle $T \tM$. More explicitly,  one can check convergence by expressing $Q_k$ in a smooth local frame, see the proof
   of Lemma~\ref{le:convergence_welldefined}  below.
   
   \item The reader might wonder why we introduce the points  $q_k$ rather than defining $f_k$  simply by using $\exp_q^{-1}$.  The point is
   that the Lipschitz estimate on $w_k$ ensures that the image $w_k(B_{h_k}(p))$ is contained in a ball of radius $C h_k$ around $q_k$. Thus
   in normal coordinates around $q_k$ one can  obtain  estimates
  like  \eqref{eq:metric_normal}
 with error terms of order $\mathcal O(h_k^2)$.
 Normal coordinates around $q$ give only weaker estimates since we know $d_{\tilde g}(q_k, q) \to 0$, but in general there is 
   no rate of convergence in terms of $h_k$. 
   
   \item Instead of the points $q_k$ one can use in  \eqref{eq:define_fk} a more intrinsically defined Riemannian centre of mass
   which depends only the maps $w_k$ and not on $q$. Indeed, the Lipschitz condition on $w_k$ and the fact that the images of the maps  $w_k$ stays
    in a bounded set  of $\tM$ imply that, for sufficiently large $k$, there exists a unique point $\check{q}_k$ which minimizes the quantity
    $D(q') = \int_{B_{h_k}(p)} d^2_{\tilde g}(w_k,q') \, d\Vol_g$, see \cite[Def. 1.3]{karcher77}. We have opted for the more pedestrian definition 
     \eqref{eq:define_qk} because it is simpler and is sufficient for our purposes.
   \end{enumerate}
   \end{remark}

We 
show next  that  if    $u_k\rightarrow(q,Q,f)$, then   $Q$ and   $f$  are uniquely determined by the sequence $u_k$.  In particular, 
 they do not depend on the choices of $w_k$, $Q_k$, and $c_k$.  
 Note that $q$ is determined by $u_k$ in view of 
 \eqref{eq:convergence_condition_1}.
 We also show that $c_k$ is of order $h_k$.

 \begin{lemma}  \label{le:convergence_welldefined} 
 Suppose that $u_k$, $w_k$,  $Q_k$, $c_k$,   $q$, $Q$, $f$,   and $\check f$  are as in 
Definition~\ref{def:define_convergence_notion} and in particular conditions
\eqref{eq:convergence_condition_1}--\eqref{eq:converge_fk_def} 
hold.
Suppose that there exist $w'_k$,  $Q'_k$, $c'_k$, $f'_k$,  $Q'$, $f'$ and $\check{f}'$ such that 
conditions \eqref{eq:uniform_lipschitz_condition}-- \eqref{eq:converge_fk_def}  hold for the primed quantities.
Then $Q' = Q$ and $f' = f$ (as equivalence classes).

Moreover, if conditions (i)--(iii) in Definition~\ref{def:define_convergence_notion} are satisfied, then
\begin{equation} \label{eq:estimate_ck}
\sup_k \frac{|c_k|}{h_k} < \infty.
\end{equation}
\end{lemma}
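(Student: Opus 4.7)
The plan is to compare the two triples $(w_k,Q_k,c_k)$ and $(w'_k,Q'_k,c'_k)$ via the coordinate-change map $\Psi_k:=\exp_{q_k}^{-1}\circ\exp_{q'_k}$ on the set where the two Lipschitz approximations agree; the bound \eqref{eq:estimate_ck} on $c_k$ will come from a separate averaging argument, which I treat first. Rearranging \eqref{eq:define_fk} gives $Q_k(h_k X+h_k c_k+h_k^3 f_k(X))=(\exp_{q_k}^{-1}\circ w_k\circ\exp_p)(h_k X)$. Averaging over $X\in B_1(0)$ and comparing the right-hand side with \eqref{eq:define_qk}, the change-of-basepoint map $\exp_{q_k}^{-1}\circ\exp_q$ sends $V_k:=\exp_q^{-1}(q_k)$ to $0$ with a near-isometric linear derivative, while by \eqref{eq:uniform_convergence_wk} and the uniform Lipschitz bound the integrand $\exp_q^{-1}\circ w_k\circ\exp_p(h_k X)$ is $O(h_k)$ pointwise. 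A first-order Taylor expansion around $V_k$ then gives $\fint(\exp_{q_k}^{-1}\circ w_k\circ\exp_p)(h_k X)\,dX=O(h_k^2)$; combined with the $L^2$-boundedness of $f_k$ this yields $|c_k|\le C h_k$.

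For the uniqueness, \eqref{eq:convergence_condition_2} (applied to both sequences) ensures that $w_k=w'_k$ on a set $\Omega_k\subset h_k B_1(0)$ whose complement has relative measure $\le C h_k^4$. As both maps are uniformly Lipschitz with values in a ball of radius $O(h_k)$ around $q$, the integrands defining $V_k,V'_k$ differ by $O(h_k)$ on a set of relative measure $O(h_k^4)$; hence $|V_k-V'_k|=O(h_k^5)$ and $|a_k|:=d_{\tilde g}(q_k,q'_k)\le C h_k^5$. Since $\Psi_k\equiv\id$ whenever $q_k=q'_k$, smoothness of $\Psi_k$ in $(q_k,q'_k)$ yields $|D^n\Psi_k(0)|=O(|a_k|)$ for all $n\ge 2$, so in the Taylor expansion $\Psi_k(Y)=a_k+P_k Y+S_k(Y)$ one has the sharp bound $|S_k(Y)|\le C|a_k|\,|Y|^2$, giving $|S_k|=O(h_k^7)$ whenever $|Y|=O(h_k)$ (which holds pointwise on $\Omega_k$ by the Lipschitz bound on $w'_k$). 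Substituting the expansion into the identity $Q_k(h_k X+h_k c_k+h_k^3 f_k)=\Psi_k(Q'_k(h_k X+h_k c'_k+h_k^3 f'_k))$ on $\Omega_k$, dividing by $h_k$, and setting $B_k:=Q_k^{-1}P_k Q'_k\to Q^{-1}Q'$ yields
\[ (I-B_k)X+(c_k-B_k c'_k)+h_k^2(f_k-B_k f'_k)=\frac{Q_k^{-1}a_k}{h_k}+\frac{Q_k^{-1}S_k}{h_k}, \]
with right-hand side of pointwise size $O(h_k^4+h_k^6)$.

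The right-hand side and the $h_k^2$-term both vanish in $L^2(\Omega_k)$ as $k\to\infty$ (the latter by Rellich compactness of $f_k,f'_k$), so the affine function $(I-B_k)X+(c_k-B_k c'_k)$ converges to $0$ in $L^2(B_1(0))$. Since $\int_{B_1(0)}X\,dX=0$, the $L^2$-norm of an affine function decouples into linear and constant parts, so $I-B_k\to 0$ and $c_k-B_k c'_k\to 0$; combined with $B_k\to Q^{-1}Q'$ this forces $Q=Q'$. Dividing by $h_k^2$ instead, the right-hand side is now $O(h_k^2+h_k^4)\to 0$ thanks to the sharp bound on $S_k$, yielding
\[ \frac{I-B_k}{h_k^2}X+\frac{c_k-B_k c'_k}{h_k^2}\to -(\check f-\check{f}')\quad\text{in }L^2(B_1(0)). \]
Hence $\check f-\check{f}'=-AX-d$ is affine with $A=\lim(B_k-I)/h_k^2$. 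Since $B_k$ differs from an element of $SO(T_p\mathcal M)$ by $O(|a_k|)=O(h_k^5)$, the relation $B_k^T B_k=I+o(h_k^2)$ combined with the expansion $B_k=I+h_k^2 A+o(h_k^2)$ forces $A+A^T=0$. Thus $\check f-\check{f}'$ is affine with skew-symmetric derivative, i.e., $f=f'$ as equivalence classes.

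\textbf{The main obstacle} is the sharp remainder estimate $|S_k(Y)|\le C|a_k|\,|Y|^2$: the naive bound $|S_k|=O(|Y|^2)$ would not survive the division by $h_k^3$ needed to extract $\check f-\check{f}'$, and the crucial improvement comes from the fact that $\Psi_k\equiv\id$ on the diagonal $\{q_k=q'_k\}$ together with the quantitative estimate $|a_k|=O(h_k^5)$ arising from the polynomial $h_k^4$-smallness of the set where $w_k\ne w'_k$.
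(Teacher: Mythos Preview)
Your argument is correct and follows the same overall strategy as the paper: bound $d_{\tilde g}(q_k,q'_k)$ using the smallness of $\{w_k\ne w'_k\}$, compare the two representations through the transition map $\exp_{q_k}^{-1}\circ\exp_{q'_k}$, and extract skew-symmetry of the limiting linear part from near-orthogonality. The execution differs in several respects. The paper works throughout with the \emph{differentials} $d\bar f_k$, so only weak $L^2$-convergence of $df_k$ is needed; you work with $f_k$ itself and invoke Rellich compactness for strong $L^2$-convergence. The paper sets $\bar R_k:=\bar Q_k^{-1}\bar Q'_k\in SO(n)$ exactly and absorbs the transition-map contribution as an additive $O(h_k^{5+4/n})$ error in $d(v_k\circ\tilde w'_k)$; you fold $P_k=D\Psi_k(0)$ into $B_k=Q_k^{-1}P_kQ'_k$ and then argue that $B_k$ lies within $O(h_k^5)$ of $SO$, which is an equally clean way to recover skew-symmetry of $A$. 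The paper also obtains the sharper $d_{\tilde g}(q_k,q'_k)=O(h_k^{5+4/n})$ by first proving a pointwise bound $\sup|w_k-w'_k|\le Ch_k^{1+4/n}$, but your $O(h_k^5)$ already suffices. Two small points to tighten: the claim that $\exp_q^{-1}\circ w_k\circ\exp_p(h_kX)=O(h_k)$ pointwise is not literally true (there is no rate for $q_k\to q$), but your Taylor expansion around $V_k$ only needs the deviation from $V_k$ to be $O(h_k)$, which does hold by the Lipschitz bound; and when you divide by $h_k^2$ and pass from convergence on $\Omega_k$ to convergence on $B_1(0)$ for the affine part, a word on finite-dimensionality of affine functions (so that $\|\cdot\|_{L^2(\Omega_k)}$ and $\|\cdot\|_{L^2(B_1)}$ are uniformly comparable once $|\Omega_k^c|$ is small) would make the step explicit.
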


\begin{proof}
\begin{enumerate}[Step 1:]
\item Estimate for $d_{\tilde g}(q_k,q_k')$.

  Let 
  $\tilde w_k(X)=w_k(\exp_p h_k X),\,\tilde w_k'(X)=w_k'(\exp_p h_k X)$.
  Then, by \eqref{eq:uniform_lipschitz_condition}, 
  \[\Lip\tilde w_k+\Lip\tilde w_k'\leq Ch_k,\]
  and, by \eqref{eq:convergence_condition_2}, 
  \[\mu(\{X\in B_1(0):\tilde w_k(X)  \neq \tilde w_k'(X)\})\leq Ch_k^4.\]
  Thus for each $X\in B_1(0)$ there exists $Y\in B_1(0)$ such that
  $|Y-X|\leq C 
   h_k^{4/n}$ and $\tilde w_k(Y)=\tilde w_k'(Y)$.
  It follows that
  \[     \sup_{x      } |\tilde w_k(X)-\tilde w_k'(X)|\leq Ch_k^{1+4/n},\]
  and
  \[\sup_{ x   \in B_{h_k}(p)}  |\exp_q^{-1}w_k(x)-\exp_q^{-1}w_k'(x)|
    \leq Ch_k^{1+4/n}.
  \]
  Since
  \[\frac{1}{\mu(B_h(p))}\mu\left(
      \{x:\exp_q^{-1}w_k(x)\neq\exp_q^{-1}w_k'(x)\}
    \right)
    \leq Ch_k^4,
  \]
  we get
  \[d_{\tilde g}(q_k,q_k')
    \leq\frac{C}{h_k^n}(Ch_k^{4+n}h_k^{1+4/n})
    \leq Ch_k^{5+4/n}.
  \]
  
  \item Comparison of $\exp_{q'_k}^{-1}$ and $\exp_{q_k}^{-1}$.\\
  Here and in the rest of the argument  it is convenient to work in local coordinates.
   Thus let $\tilde{\underline V} =: (\tilde{\underline V}_1, \ldots, \tilde{\underline V}_n)$ be a smooth, positively oriented,
 orthonormal frame defined in an open neigbhourhood of $q$. For $q'$ in that neighbourhood  consider the isometries 
 $\imath_{\tilde{\underline V}(q')}: \R^n \to  T\tM_{q'}$ given by  $\imath_{\tilde{\underline V}(q')}: = 
 \sum_{j=1}^n y^j \tilde V_j(q')$. 
 Similarly, fix a positively oriented orthonormal basis $\underline V$ of $T_p \mathcal M$ and define $\imath_V$ in the same way.
 
 Recall that $\inj(q)$ denotes the injectivity radius of $\exp_q$. Thus there exists a $\rho > 0$ such that for $\tilde q, \tilde{\tilde{q}} \in B_\rho(q)$
 and $x \in B_{\inj(q)/2}(0) \subset \R^n$  the expression 
 $$
 v_{\tilde q, \tilde{\tilde{q}}}(x) = \left(\imath_{  \tilde{\underline V}(\tilde{\tilde{q}})}^{-1}
  \circ \exp_{ \tilde{\tilde{q}}}^{-1} \circ \exp_{\tilde q} \circ  \imath_{ \tilde{\underline V}(\tilde{q})}\right)(x)
  $$ 
  is well defined and smooth as a map from $ B_\rho(q) \times  B_\rho(q) \times  B_{\inj(q)/2}(0)$ to $\R^n$. Moreover $v_{\tilde q, \tilde q} = \id$. 
  Thus
  \begin{equation} \label{eq:error_exp_qk}
  \|  dv_{\tilde q, \tilde{\tilde{q}}}(x) - \Id \| \le C d_{\tilde g}(\tilde q, \tilde{\tilde{q}})  \qquad \forall \tilde q, \tilde{\tilde{q}} \in B_{\rho/2}(q), \quad  \forall x \in B_{\inj(q)/4}(0).
  \end{equation}
  It follows from  \eqref{eq:convergence_qk}   and Step 1 that the maps  $\bar v_k$ given by 
  \begin{equation}  \label{eq:define_vk}
  \bar v_k(x) =  \frac{1}{h_k}\left( \imath_{  \tilde{\underline V}(q'_k)}^{-1} \circ \exp_{q'_k}^{-1} \circ \exp_{q_k} \circ  \imath_{  \tilde{\underline V}(q_k)}\right)(h_k x)
  \end{equation}
  are well-defined  for sufficiently large $k$ and $x \in B_{\inj(q)/ 2 h_k}$ and satisfy
  \begin{equation}  \label{eq:estimate_vk}
   | d\bar v_k(x) - \Id|  \le C h_k^{5 + 4/n}  \qquad \forall x \in B_{\inj(q)/4 h_k}.
  \end{equation}

 \item Uniqueness of $Q$ and $f$. \\
 Using the frames introduced in Step 2,  we define maps $\bar f_k: B_1(0) \subset \R^n \to \R^n$ and linear maps
 $\bar Q_k: \R^n \to \R^n$ by
 \begin{eqnarray}
 \bar Q_k &=& \imath_{  {\tilde{\underline{V}}(q_k)}}^{-1} \circ Q_k \circ \imath_{\underline V},  \\
 \bar f_k &=&  \imath_{\underline V}^{-1} \circ f_k \circ \imath_{\underline V}, 
 \end{eqnarray}
  and similarly for the primed quantities. We use the analogous definitions for  the limits $Q$ and $\check{f}$  (with $q_k$ replaced by $q$).
 Then $\bar Q_k, \bar{Q'_k} \in SO(n)$ and $Q_k \to Q$ if and only if $\bar Q_k \to \bar Q$. Similarly $f_k \rightharpoonup \check{f}$ in $W^{1,2}$ if and only
 if $\bar f_k \rightharpoonup \bar{\check{f}}$ in $W^{1,2}$.
 
 We also define the following maps from $B_1(0) \subset \R^n$ to $\R^n$:
 \begin{eqnarray}
 \bar w_k(x) &=& \frac1{h_k}  ( \imath_{  {\tilde{\underline{V}}(q_k)}}^{-1}  \circ  \exp_{q_k}^{-1}  \circ w_k \circ \exp_p \circ  \imath_{\underline V})(h_kx),\\
  \bar w'_k(x) &=&  \frac1{h_k} ( \imath_{  {\tilde{\underline{V}}(q'_k)}}^{-1}  \circ  \exp_{q'_k}^{-1}  \circ w'_k \circ \exp_p \circ  \imath_{\underline V})(h_kx), \\
  \tilde w'_k(x) &=&  \frac1{h_k} ( \imath_{  {\tilde{\underline{V}}(q_k)}}^{-1}  \circ  \exp_{q_k}^{-1}  \circ w'_k \circ \exp_p \circ  \imath_{\underline V})(h_kx).
\end{eqnarray}

 Then 
 $$ \bar w'_k = \bar v_k \circ \tilde w'_k,$$
 where $\bar v_k$ is given by  \eqref{eq:define_vk},  and
 \begin{equation} \label{eq:properties_tildew}
 \Lip \bar w_k + \Lip \tilde  w'_k \le  C, \qquad  \mathcal L^n(\{\bar  w_k \ne \tilde w'_k\}) \le C h_k^4. 
 \end{equation}

 It follows from the definitions of $f_k$ and $f'_k$, as well as the definition
 of $\bar v_k$ in \eqref{eq:define_vk} that
  \begin{eqnarray}
  d \bar f_k &=& \frac{1}{h_k^2}  \left( (\bar {Q}_k)^{-1}     d\bar w_k -  \Id  \right),   \label{eq:barfk} \\
  d \bar f'_k &=& \frac{1}{h_k^2}   \left( ({\bar Q}'_k)^{-1}  d ( \bar v_k \circ\tilde  w'_k) -  \Id  \right) \label{eq:barfk_prime}
 \end{eqnarray}
 
 Now we first exploit the second estimate in \eqref{eq:properties_tildew}
 and the estimate  \eqref{eq:estimate_vk} for $d \bar v_k - \Id$ to show that $\bar Q_k$ and $\bar Q'_k$ have the same limit.
 Let $E_k = \{ \tilde w_k \ne \tilde w'_k\}$. Then $d\bar w_k = d \tilde w'_k$ a.e. in $B_1(0)\setminus E_k$.
Thus, by   \eqref{eq:estimate_vk} and the estimates of the Lipschitz constants in  \eqref{eq:properties_tildew}, we get
\begin{equation}
| d (  \bar v_k \circ\tilde  w'_k) - d \bar w_k)   | \le C h_k^{5 + 4/n} \quad \text{a.e.\ in $B_1(0) \setminus E_k$.}
\end{equation}
Let $\bar R_k := \bar Q_k^{-1} \bar Q'_k$, multiply \eqref{eq:barfk_prime} by $-\bar R_k$, add \eqref{eq:barfk},  and multiply the 
resulting equation by $ h_k^2(1 - 1_{E_k})$. This yields
\begin{equation}  \label{eq:difference_Rk_Id}
h_k^2 ( d\bar f_k - \bar R_k  d \bar f'_k)   (1- 1_{E_k}) = \mathcal O(h_k^{5 + 4/n}) + (\bar R_k- \Id) (1- 1_{E_k}).
\end{equation}
Since $\bar f_k$ and $\bar f'_k$ converge weakly in $L^2$, $\bar R_k \in SO(n)$,   and $\mathcal L^n(E_k) \to 0$, it follows that 
$|\bar R_k - \Id| \le C h_k^2$. In particular,  $\bar R_k \to \Id$ as $h_k \to 0$ and hence $\bar Q = \bar Q'$. 

To show that   $\check{f} \sim \check{f'}$, we note that   there exists a subsequence $k_j \to \infty$ such that the limit
$$ A := \lim_{j \to \infty} \frac{\bar R_{k_j} - \Id}{h_{k_j}^2}$$
exists.
Since $\bar R_k \in SO(n)$,  it follows that $A$ is skewsymmetric.
Dividing    \eqref{eq:difference_Rk_Id}by $h_k^2$ and passing to the limit along the subsequence $k_j$,  we get
$d\bar{\check{f}} - d\bar{\check{f'}}= A$. Thus $\bar{\check{f}} \sim \bar{\check{f'}}$. This is equivalent to $\check{f} \sim \check{f'}$ or
$f = f'$ (as equivalence classes).

\item Proof of \eqref{eq:estimate_ck}.\\
It follows from the definition of $q_k$ and the Lipschitz bound on $w_k$ that $w_k(B_{h_k}(p))$ is contained in a ball $B_{C h_k}(q_k)$.
Thus Taylor expansion of $\hat v_k = \exp_{q_k}^{-1} \circ \exp_q$ around $Z_k = \exp_q^{-1}(q_k)$ yields
$$ \exp_{q_k}^{-1} \circ w_k = \hat v_k \circ \exp_{q}^{-1} \circ w_k = 0 + d\hat v_k(Z_k)[\exp_q^{-1} \circ w_k - Z_k] + \mathcal O(h_k^2).$$
Hence
\begin{eqnarray}
& & \fint_{B_1(0)}  (\exp_{q_k}^{-1} \circ w_k  \circ \exp_p)(h_k X)   \, d\Vol_{g(p)}(X)   \nonumber \\
&=& d\hat v_k(Z_k)\left[  \fint_{B_1(0)}  \left( (\exp_{q}^{-1} \circ w_k  \circ \exp_p)(h_k X)  - Z_k  \right) \, \, \,  d\Vol_{g(p)}(X)   \right] + \mathcal{O}(h_k^2)   \nonumber  \\
 &=&  \mathcal{O}(h_k^2)    \label{eq:avg_qk}
\end{eqnarray}
where we used the definition  \eqref{eq:define_qk} of $q_k$ for the last identity.  
 Since $f_k$ is bounded in $L^2$, equation  \eqref{eq:estimate_ck} now follows by integrating 
 \eqref{eq:define_fk} over $X \in B_1(0)$ and using  \eqref{eq:avg_qk}.
   \end{enumerate}
\end{proof}

\section{Compactness and $\Gamma$-convergence}  \label{se:gamma}

For $u_h\in W^{1,2}(B_h(p),\tilde{\mathcal{M}})$ define the
\emph{energy of $u_h$} by
\[E_{B_h(p)}(u_h)
  :=\fint_{B_h(p)}\dist^2(du_h,SO(g,\tilde g))d\Vol_g.
\]

For points $p \in \mathcal M$ and $q \in \tM$, an orientation preserving isometry $Q \in SO(T_p \mathcal M, T_q \tM)$,  and the unit ball $B_1(0)$ in $T_p \mathcal M$ 
we define a functional $\mathcal I^{q,Q}  : W^{1,2}(B_1(0); T_p \mathcal M) \to \R$ by
\begin{equation} \label{eq:limit_functional}
\mathcal I^{q,Q}(f) = \fint_{B_1(0)} 
 \left| \sym df(X) - \mathcal B(X)\right|^2 \, d\Vol_{g(p)}(X),
\end{equation}
where $|\cdot|$ denotes the Frobenius norm on $T_p \mathcal M  \otimes T_p^*\mathcal M$  and  $\mathcal B(X)$ is the element of $T_p \mathcal M  \otimes T_p^*\mathcal M$
given by 
\begin{equation}  \label{eq:define_B_limit}
\mathcal B(X)(Y) = \frac16 \left(  \mathcal R(p)(X,Y,X) - \tR^Q(X,Y,X) \right)
\end{equation}
with
 \begin{equation} \label{eq:define_RQ_limit}
 \tR^Q(X,Y,X) :=  Q^{-1} \tR(q)(QX, QY, QX).
\end{equation}

It follows directly from the definition that  $\mathcal I^{q,Q}$ depends only on the equivalence
class of $f$ (where the equivalence relation is given by   \eqref{eq:equivalence_relation}). We will thus 
view $\mathcal I^{q,Q}$ also as a functional on the space $ W^{1,2}(B_1(0); T_p \mathcal M)/ \sim$ without change of 
notation.

Our main result is the following compactness and $\Gamma$-convergence result.

\begin{theorem}   \label{th:main} Let $(\mathcal  M, g)$ and $(\tM, \tilde g)$
  be smooth, oriented, $n$-dimensional Riemannian manifolds. Then 
    the following assertions hold:
\begin{enumerate}[(i)]
\item   \label{it:main1} Compactness: 
Assume in addition that   $\tM$ is compact. Let $h_k \to 0$ and assume that  there exists a constant $C > 0$ such the maps
$u_k: B_{h_k}(p)\to \tM$ satisfy  
$E_{h_k}(u_k)\leq Ch_k^4$. Then there exists a
  subsequence $h_{k_j} \to 0$  such that
  \[u_{k_j}\longrightarrow(q,Q,f)\]
  in the sense of  Definition~\ref{def:define_convergence_notion};
\item    \label{it:main2}  $\Gamma-\liminf$ inequality: if $h_k \to 0$ and $u_k \to (q,Q,f)$, then
  \[\liminf_{k\rightarrow\infty}\frac{1}{h_k^4}
    E_{h_k}(u_k)\geq  \mathcal I^{q,Q}(f).
    \]
\item     \label{it:main3} Recovery sequence: Given a triple $(q,Q,f)$ and
  $h_k\rightarrow 0$, there exists $u_k$ such that
  $u_k\rightarrow(q,Q,f)$ and 
  $$ \lim_{k\rightarrow\infty}\frac{1}{h_k^4}
    E_{h_k}(u_k) =    \mathcal I^{q,Q}(f).
  $$
\end{enumerate}
\end{theorem}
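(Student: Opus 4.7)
The plan is to handle the three parts in order, using Lipschitz truncation and the normal-coordinate expansion \eqref{eq:metric_normal} as the two common technical tools.

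For compactness \textbf{(i)}, the first step is to produce the Lipschitz approximations $w_k$ demanded by Definition~\ref{def:define_convergence_notion}. Viewing $\tM\hookrightarrow\R^s$ isometrically, a standard maximal-function truncation applied to $u_k$ in normal coordinates around $p$, followed by nearest-point retraction onto $\tM$, yields $w_k:B_{h_k}(p)\to\tM$ with $\Lip w_k\le C$ and $\mu(\{u_k\ne w_k\})\le C h_k^4\mu(B_{h_k}(p))$; the factor $h_k^4$ comes from Chebyshev applied to the energy bound. A Poincar\'e/rigidity estimate at scale $h_k$ then forces $u_k\to q$ in measure for some $q\in\tM$ along a subsequence (using compactness of $\tM$), and $q_k$ defined by \eqref{eq:define_qk} converges to $q$. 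Pulling back via $\exp_p$ and $\exp_{q_k}$ and applying the quantitative rigidity theorem of Friesecke-James-M\"uller to the rescaled map $X\mapsto h_k^{-1}(\exp_{q_k}^{-1}\circ w_k\circ \exp_p)(h_kX)$ produces $Q_k\in SO(T_p\mathcal M,T_{q_k}\tM)$ and constants $c_k$ for which $f_k$ in \eqref{eq:define_fk} is uniformly bounded in $W^{1,2}(B_1(0))$. Further subsequences yield $Q_k\to Q$ and $f_k\rightharpoonup\check f$.

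For the $\Gamma$-liminf \textbf{(ii)}, $du_k=dw_k$ a.e.\ on $\{u_k=w_k\}$, and since the integrand is non-negative and $dw_k$ is uniformly bounded, the contribution of $\{u_k\ne w_k\}$ to $E_{h_k}(w_k)$ is $\mathcal{O}(h_k^4)$, so $\liminf h_k^{-4}E_{h_k}(u_k)\ge\liminf h_k^{-4}E_{h_k}(w_k)-\mathcal{O}(1)$, the error being made arbitrarily small by refining the truncation threshold. Substituting $y=h_kX$, pulling back through $\exp_p$ and $\exp_{q_k}$, and using \eqref{eq:dist_in_coordinates} together with \eqref{eq:metric_normal} for both metrics gives the pointwise identity
\[
\tfrac{1}{h_k^4}\dist^2\bigl(dw_k,SO(g,\tilde g)\bigr)\bigl(\exp_p(h_kX)\bigr)=\bigl|\sym df_k(X)-\mathcal B_k(X)\bigr|^2+o(1),
\]
where $\mathcal B_k$ is obtained from \eqref{eq:define_B_limit} with $Q$ replaced by $Q_k$ and the error is uniform on $B_1(0)$. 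Because $\sym df_k\rightharpoonup\sym d\check f$ weakly in $L^2$, $\mathcal B_k\to\mathcal B$ uniformly, and $d\Vol_{(\exp_p)^*g}\to d\Vol_{g(p)}$, weak lower semicontinuity of the convex quadratic integrand gives $\liminf h_k^{-4}E_{h_k}(u_k)\ge\mathcal I^Q(\check f)=\mathcal I^Q(f)$, the last equality holding because $\mathcal I^Q$ depends only on the equivalence class.

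For the recovery sequence \textbf{(iii)}, one may assume $f$ smooth by density. Define
\[
u_k(\exp_p y):=\exp_q\bigl(Q(y+h_k^3 f(y/h_k))\bigr),\qquad y\in B_{h_k}(0)\subset T_p\mathcal M,
\]
which is well-defined for large $k$. Taking $w_k=u_k$ and reading off $q_k,Q_k,c_k$ from their definitions, a direct computation (using \eqref{eq:error_exp_qk} to transfer between $\exp_q$ and $\exp_{q_k}$) gives $d_{\tilde g}(q_k,q)=\mathcal{O}(h_k^3)$, $Q_k\to Q$, and $f_k\to f$ strongly in $W^{1,2}$, so $u_k\to(q,Q,f)$. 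Reversing the expansion used in (ii) yields $h_k^{-4}E_{h_k}(u_k)\to\mathcal I^Q(f)$. A diagonal argument based on the density of smooth maps in $W^{1,2}(B_1(0);T_p\mathcal M)/\!\sim$ and strong $W^{1,2}$-continuity of $\mathcal I^Q$ extends this to general $f$.

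The main obstacle is the manifold-valued Lipschitz truncation underpinning (i): a truncation performed in the ambient $\R^s$ only guarantees values in a neighbourhood of $\tM$, while the nearest-point retraction requires the truncated map to remain within a tubular neighbourhood of $\tM$. Closing this gap forces one to extract the limit point $q$ and the convergence in measure \emph{before} constructing $w_k$, and then to verify \eqref{eq:uniform_convergence_wk} so that the local-in-$q$ estimates leading to \eqref{eq:error_exp_qk} are applicable. Organising this bootstrap so that the constants, exceptional sets, and identifications of nearby tangent spaces $T_{q_k}\tM$ all line up is the most delicate part of the argument.
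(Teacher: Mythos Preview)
Your overall architecture matches the paper: Lipschitz truncation plus FJM rigidity for compactness, normal-coordinate expansion and weak lower semicontinuity for the liminf, and an explicit ansatz for the recovery sequence. However, two of your execution details do not work as stated.

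First, in part (ii) the passage from $u_k$ to $w_k$ is not clean. You write $\liminf h_k^{-4}E_{h_k}(u_k)\ge\liminf h_k^{-4}E_{h_k}(w_k)-\mathcal O(1)$ and propose to kill the $\mathcal O(1)$ by ``refining the truncation threshold''. But the Lipschitz level is already fixed by the convergence hypothesis, and in any case raising the threshold makes $\Lip w_k$ larger, which feeds back into the very error you are trying to suppress. The paper avoids this entirely by never replacing $u_k$ with $w_k$ in the energy: it keeps the indicator $1_{\{u_k=w_k\}}$ inside the integral, so that
\[
E_{h_k}(u_k)\ \ge\ \fint_{B_1(0)} 1_{B_1(0)\setminus\bar E_k}\ \dist^2\bigl(dw_k,SO(g,\tilde g)\bigr)\,d\Vol,
\]
and then uses that $\mathcal L^n(\bar E_k)\to 0$ together with $L^2$-boundedness of $G_k$ to pass to the limit. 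Relatedly, your claim that the Taylor remainder in $\tfrac{1}{h_k^4}\dist^2(dw_k,SO)$ is $o(1)$ \emph{uniformly} on $B_1(0)$ is not justified: $h_k^2\,df_k$ is only known to be bounded (from $\Lip w_k\le C$), not to tend to zero uniformly. The paper handles this by excising a further set $F_k=\{|h_k^2G_k|>h_k\}$ of vanishing measure before Taylor expanding.

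Second, your closing paragraph misdiagnoses the obstacle in (i). You say one must extract the limit point $q$ \emph{before} constructing $w_k$, so that the truncated map is guaranteed to land in a tubular neighbourhood of $\tM$. The paper shows this bootstrap is unnecessary: after truncating $u_k$ in $\R^s$ to a Lipschitz map $\hat v_k$, the set $\{\hat v_k=\hat u_k\}$ has positive measure by \eqref{eq:measure_vk}, so there is at least one point $x'$ with $\hat v_k(x')\in\tM$; then $\Lip\hat v_k\le C$ and $\operatorname{diam}B_{h_k}(0)\le 2h_k$ force $\dist(\hat v_k(x),\tM)\le Ch_k$ everywhere, and the nearest-point retraction applies for large $k$ without any prior knowledge of $q$. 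The limit $q$ is extracted only afterwards, from compactness of $\tM$ and the sequence $w_k(p)$.

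Your recovery-sequence argument (smooth approximation plus a diagonal step) is a legitimate variant; the paper instead truncates a representative $\check f$ to Lipschitz maps $\check f_k$ with $\Lip\check f_k\le h_k^{-1}$ and centres so that $q_k=q$ exactly, which saves the transfer between $\exp_q$ and $\exp_{q_k}$.
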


The combination of properties (ii) and (iii) can be stated concisely as the fact that  $\frac{1}{h^4} E_h$ $\Gamma$-converges  (with respect to the convergence
in Definition~\ref{def:define_convergence_notion})  to $\mathcal I$ with $\mathcal I(q,Q,f) = \mathcal I^{q,Q}(f)$.

\medskip

To prove compactness, we use the following result on Lipschitz approximation of $\R^s$-valued Sobolev maps. This is a 
minor variation of the classical result by Liu
\cite[Thm. 1]{liu77},  see also \cite[Section 6.6.3, Thm. 3]{evans_gariepy92}.

\begin{lemma}[\cite{fjm02}, Prop. A.1]\label{lem:approx_lipschitz}
  Let $s,n\geq 1$ and $1\leq p<\infty$ and suppose
  $U\subset\mathbb{R}^n$ is a bounded Lipschitz domain. 
  Then there exists a constant $ C = C(U,n,s,p)$ with the following property:

  For each $u\in W^{1,p}(U,\mathbb{R}^s)$ and each $\lambda>0$ there
  exists $v:U\rightarrow\mathbb{R}^s$ such that
  \begin{enumerate}[(i)]
  \item  $\Lip v \le C \lambda$,
  \item 
 $ \mathcal L^n\left( \{x\in U:u(x)\neq v(x)\} \right)
    \leq\displaystyle\frac{C}{\lambda^p}
    \displaystyle\int_{\{x\in U:|du|_e>\lambda\}}|du|_e^p\,dx$.
  \end{enumerate}
  Here $|\cdot|_e$ denotes the Frobenius norm with the respect to the standard scalar products on $\R^n$ and $\R^s$. 
\end{lemma}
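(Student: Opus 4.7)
The plan is to follow the classical approach to Lipschitz approximation via the Hardy--Littlewood maximal function, as in Liu's original proof and the exposition of Evans--Gariepy cited in the excerpt. The key pointwise ingredient is the Sobolev-type inequality asserting that for almost every pair of Lebesgue points $x,y$ of $u \in W^{1,p}(\mathbb{R}^n,\mathbb{R}^s)$,
\[
|u(x)-u(y)| \le C|x-y|\bigl(M(|du|_e)(x) + M(|du|_e)(y)\bigr),
\]
where $M$ denotes the Hardy--Littlewood maximal function. This estimate is obtained by comparing $u(x)$ and $u(y)$ to the averages of $u$ on a telescoping sequence of shrinking balls and applying the Poincar\'e inequality on each annulus.

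To make this applicable on the bounded Lipschitz domain $U$, I would first extend $u$ to a map in $W^{1,p}(\mathbb{R}^n,\mathbb{R}^s)$ with norm controlled by a constant depending only on $U$, using a standard Sobolev extension operator available on bounded Lipschitz domains. Then for each $\lambda>0$, I define the \emph{good set} $G_\lambda := \{x\in U : M(|du|_e)(x) \le \lambda\}$. The pointwise Sobolev inequality shows that, after modifying $u$ on a null set, the restriction $u|_{G_\lambda}$ is $C\lambda$-Lipschitz. The desired map $v:U\to\mathbb{R}^s$ is then produced by extending $u|_{G_\lambda}$ to all of $U$ via Kirszbraun's theorem (or McShane's extension applied componentwise), yielding $\Lip v \le C\lambda$ and $v=u$ on $G_\lambda$; this gives (i).

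For (ii), note that $\{u\neq v\} \subset U\setminus G_\lambda = \{M(|du|_e) > \lambda\}$. To bound the measure of this level set by an integral restricted to $\{|du|_e > \lambda\}$, I would use the standard truncation trick: decompose $|du|_e = |du|_e\,\chi_{\{|du|_e>\lambda/2\}} + |du|_e\,\chi_{\{|du|_e\le \lambda/2\}}$. The second summand has maximal function pointwise at most $\lambda/2$, so $\{M(|du|_e)>\lambda\}\subset \{M(|du|_e\chi_{\{|du|_e>\lambda/2\}})>\lambda/2\}$. The weak-type $(1,1)$ maximal inequality together with the trivial bound $|du|_e \le (2|du|_e/\lambda)^{p-1}|du|_e$ on $\{|du|_e>\lambda/2\}$ yields
\[
\mathcal{L}^n(U\setminus G_\lambda) \le \frac{C}{\lambda^p}\int_{\{|du|_e > \lambda/2\}} |du|_e^p\,dx.
\]
The form stated in the lemma, with integration over $\{|du|_e > \lambda\}$, is then recovered by the harmless reparametrization $\lambda \mapsto 2\lambda$ (at the cost of enlarging $C$).

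The argument is essentially routine; the only bookkeeping issue is ensuring that the extension to $\mathbb{R}^n$ and the modification of $u$ on null sets are handled carefully, and that all constants depend only on $U$, $n$, $s$, and $p$. No step presents real conceptual difficulty.
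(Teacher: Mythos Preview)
The paper does not give its own proof of this lemma; it simply cites Proposition~A.1 of \cite{fjm02} (and mentions the earlier versions of Liu and Evans--Gariepy), so there is no in-paper argument to compare against. Your sketch is the standard maximal-function approach used in those references---Sobolev extension to $\mathbb{R}^n$, the pointwise estimate $|u(x)-u(y)|\le C|x-y|(M|du|(x)+M|du|(y))$, Kirszbraun extension off the good set $\{M|du|\le\lambda\}$, and the weak-type bound with truncation---and it is correct.
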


\begin{remark}   \label{re:dilation}It is easy to see that the constant $C(U,n,s,p)$ can be chosen invariant under dilations of $U$, i.e.,
$C(rU,n,s,p) = C(U,n,s,p)$. Indeed, given $u \in W^{1,p}(rU, \R^s)$ apply the lemma to the rescaled function $\tilde u:U \to \R^s$ given by $\tilde u(x) = r^{-1}u(rx)$,
obtain a Lipschitz approximation $\tilde v: U \to \R^s$ and define $v$ by $v(y) = r \bar v(y/r)$. 
\end{remark}

\begin{proof}[Proof of Theorem~\ref{th:main}~\eqref{it:main1} (compactness)]
We proceed in two steps. First we show that 
there exists a good Lipschitz approximation $w_k$  of $u_k$ and then deduce compactness by expressing  $\dist(dw_k, SO(g, \tilde g))$
in terms of normal coordinates in $\mathcal M$ and $\tM$.

\begin{enumerate}[Step 1:]
\item Lipschitz approximation: There exists a constant $C > 0$ and Lipschitz maps $w_k: B_{h_k}(p) \to \tM$ such that, for all sufficiently large $k$,
\begin{eqnarray}
\Lip w_k &\le& C,  \label{eq:lipwk}\\
\frac{1}{\mu(B_{h_k}(p))} \mu(\{ u_k \ne w_k \}) & \le & C h_k^4.
\label{eq:uk_not_wk}
\end{eqnarray}

The construction of the maps $w_k$ is very similar to the construction in \cite[pp. 390--391]{kupferman_maor_asaf_reshetnyak}. We include the
details for the convenience of the reader.
To construct $w_k$, we recall that in view of  the Nash imbedding theorem \cite[Theorem 3]{nash56}, we can view $\tilde{\mathcal{M}}$ as
a subset of $\mathbb{R}^s$ for large $s$, with the metric on the tangent space of  $\tM$ induced by the 
Euclidean metric of $\R^s$. Let $\underline V = (V_1,  \ldots, V_n)$ be a positively oriented, orthonormal basis of $T_p \mathcal M$ and define
$\hat u_k: B_{h_k}(0) \subset \R^n \to \tM \subset \R^s$ by
$$ \hat  u_k = u_k \circ \exp_p \circ \imath_{\underline V}$$
were $\imath_{\underline V}(x) = \sum_{j=1}^n x^j V_j$. Let $(\bar g)_{ij} = \big(( \exp_p \circ \imath_{\underline V})^* g\big)(e_i, e_j)$ be the coefficients of the pullback metric in the standard
Euclidean basis. Then by  \eqref{eq:metric_normal}
\begin{equation}  \label{eq:pullback_metric_unrescaled} 
 |\bar g_{ij} - \delta_{ij}| \le C h_k^2  \quad \text{on $B_{h_k}(0)$.}
\end{equation}
Since the Frobenius norm of a map in $SO(n)$ is $\sqrt n$ and since $\tM$ is isometrically imbedded into $\R^s$ it follows from   \eqref{eq:pullback_metric_unrescaled} 
that
\begin{equation}
|d \hat u_k|_e  \le (1 + C h_k^2) \big(\sqrt n + \dist(du_k, SO(g, \tilde g))\big)
\end{equation}
In particular for sufficiently large $k$ we have
\begin{equation}
|d \hat u_k|_e  \ge 4 \sqrt n \quad \Longrightarrow \quad \dist(du_k, SO(g, \tilde g)) \ge \frac12 |d \hat u_k|_e \ge 2 \sqrt n.
\end{equation}
Now apply Lemma~\ref{lem:approx_lipschitz} and Remark~\ref{re:dilation} with $u = u_k$, $U = B_{h_k}(0)$ and $\lambda = 4 \sqrt n$. Denote the corresponding 
Lipschitz approximation by $\hat v_k$ and set $E^2_k = \{ x \in B_{h_k}(0) : \hat v_k \ne \hat u_k\}$. 
Then 
\begin{equation}
\Lip \hat v_k \le C.
\end{equation}
Using that, in addition,  $\det \bar g(x) \ge (1 + Ch_k^2)^{-1}  \ge \frac12$ we get
\begin{eqnarray}
\mathcal L^n(E^2_k) &=&  \frac{C}{\lambda^2} \int_{ \{ x \in B_{h_k}(0) : |d\hat u_k|_e \ge \lambda \}}   |\hat u_k|_e^2 \, dx    \nonumber \\
& \le &  \frac{C}{\lambda^2} \int_{ B_{h_k}(p)}   \dist^2(u_k, SO(g, \tilde g)) \, d\Vol_g  \nonumber   \\
& \le &  C \mu(B_{h_k}(p)) h_k^4.   \label{eq:measure_vk}
\end{eqnarray}

In general,  the map $\hat v_k$ takes values in $\R^s$ rather than in $\tM$. This difficulty can be easily overcome by projecting back to $\tM$. 
Indeed, since $\tM$ is compact, there exists a $\rho > 0$ and a smooth projection $\pi_{\tM}$ from a $\rho$-neighbourhood of $\tM$ in $\R^s$ to $\tM$. 
Now by  \eqref{eq:measure_vk},
 there exists an $x' \in B_{h_k}(0)$ such that $\hat v_k(x')  = \hat u_k(x') \in \tM$. Since the distance function is $1$-Lipschitz we deduce that
 $\dist(\hat v_k(x), \tM) \le C |x-x'| \le C h_k$ for all $x \in B_{h_k}(0)$. Then  $\hat w_k := \pi_{\tM} \circ \hat v_k$ is well-defined for sufficiently large $k$
 and satisfies $\Lip \hat w_k \le C$. Since $\pi|_{\tM} = \id$ we have $\{ \hat w_k \ne \hat u_k \} \subset \{ \hat v_k \ne \hat u_k\}$. 
 Finally,  using that $\exp_p \circ \imath_V$, is Bilipschitz in a neighbourhood of $0$, we see that $w_k := \hat w_k \circ (\exp_p \circ \imath_V)^{-1}$ 
 satisfies  \eqref{eq:lipwk} and \eqref{eq:uk_not_wk}.

\item Compactness\\
The estimate  $\Lip w_k \le C$  implies that the image of $w_k$ is contained in the ball $B(w_k(p), C h_k)$.  Since $\tM$ is compact,
there exists a subsequence $k_j \to \infty$ and $q \in \tM$  such that $w_{k_j}(p) \to q$ as $j \to \infty$.  
Hence $\lim_{j \to \infty} \sup_{B_{h_{k_j}}} d_{\tilde g}(w_{k_j}, q) = 0$ and 
in view of \eqref{eq:uk_not_wk} we get, for all $\delta > 0$, 
$$ \lim_{j \to \infty} \frac{1}{\mu(B_{h_{k_j}}(p))} \mu\left( \{ p' \in B_{h_{k_j}}(p)) : d_{\tilde g}(u_{k_j}(p'), q) \ge \delta\}\right) = 0.$$
Thus condition (i) in Definition~\ref{def:define_convergence_notion} is satisfied for the subsequence $k_j$. 
Condition (ii)  in Definition~\ref{def:define_convergence_notion} is equivalent to  \eqref{eq:lipwk} and \eqref{eq:uk_not_wk}.

To verify condition  (iii) in Definition~\ref{def:define_convergence_notion}, consider the points $q_{k_j}$ defined by  
$$ q_{k_j} := \exp_q \left( \fint_{B_1(0)} (\exp_q^{-1} \circ w_{k_j}  \circ  \exp_p)(h_{k_j} X) \, d\Vol_{g(p)}(X) \right).$$
Since $\exp_q$  and $\exp_p$ are  Bilipschitz with Bilipschitz constant close to one in a small neighbourhood of the origin,  it follows
that  $q_{k_j} \to q$ as $j \to \infty$ and that the image of $w_{k_j}$ is contained in $B_{2 C h_{k_j}}(q_{k_j})$ for $j$ sufficiently large.

Note also that 
the approximation properties \eqref{eq:lipwk} and \eqref{eq:uk_not_wk} in combination with the hypothesis
$E_{B_{h_k}(p)}(u_k) \le C h_k^4$ imply that
\begin{equation}  \label{eq:energy_wk} 
\fint_{B_{h_k}(p)}  \dist^2(dw_k, SO(g, \tilde g)) \, d\Vol_g  \le C h_k^4.
\end{equation}

Now it is convenient to work in local coordinates, as in the proof of Lemma~\ref{le:convergence_welldefined}.
To simplify the notation,  we write  $w_k$ instead of $w_{k_j}$. 
Consider again the maps $\bar w_k: B_1(0) \subset \R^n \to \R^n$ given by
\begin{equation}
 \bar w_k(x) =  \frac1{h_k} ( \imath_{  {\tilde{\underline{V}}(q_k)}}^{-1}  \circ  \exp_{q_k}^{-1}  \circ w_k \circ \exp_p \circ  \imath_{\underline V})(h_k x).\\
\end{equation}
We now apply first the formula  \eqref{eq:dist_general_basis}  for $\dist(du, SO(g, \tilde g))$ with 
$\varphi_k(x) = \exp_p \circ  \imath_{\underline V}(h_k x)$ and $\psi_k(x) = \exp_q \circ  \imath_{\tilde{\underline V}(q_k)}(h_k x)$ and then   \eqref{eq:dist_in_coordinates}.
This yields
\begin{equation}  \label{eq:dwk_local_compactness}
 \dist(dw_k (\varphi(x)), SO(g, \tilde g))  = 
 \dist\left( \big(   \tilde{\bar g}^{(k)} \circ \bar w_k(x)\big)^{1/2} 
  \, d\bar w_k \, 
 \big(\bar g^{(k)}\big)^{-1/2}(x), SO(n)\right),
 \end{equation}
where $\bar g^{(k)}$ is the metric  (expressed in the standard basis of $\R^n$) obtained from the metric  $g$ on $\mathcal M$ by pullback under 
$\varphi_k$ and similarly for ${\tilde{\bar g}}^{(k)}$.

Using the expansion   \eqref{eq:metric_normal} of the metric in normal coordinates and Proposition~\ref{pr:distance_son_perturbed} below we deduce that 
\begin{equation}
\dist(  d\bar w_k, SO(n))(x) \le (1 + C h_k^2) \dist(dw_k, SO(g, \tilde g))\big(\exp_p \circ \imath_{\underline V}(h_k x) \big)+ C h_k^2.
\end{equation}
In view of  \eqref{eq:energy_wk}  this implies that
\begin{equation}
\fint_{B_1(0)} \dist^2\big( d\bar w_k, SO(n)\big)  \, dx \le C h_k^4.
\end{equation}
By the rigidity estimate in  \cite[Thm. 3.1]{fjm02} there exists a constant rotation $\bar Q_k \in SO(n)$ such that
\begin{equation}
\fint_{B(0,1)}    \left| \bar Q_k^{-1}  d\bar w_k - \Id  \right|^2  \, dx \le C h_k^4.
\end{equation}
Thus there exists $\bar c_k \in \R^n$ such that the functions
$$ \bar f_k = \frac{1}{h_k^2} \left(    \bar Q_k^{-1}  \bar w_k - \id -   \bar c_k\right)$$
are bounded in $W^{1,2}(B_1(0); \R^n)$ and hence a subsequence converges weakly in\\
$W^{1,2}(B_1(0); \R^n)$. Unwinding definitions, we see that condition (iii) in 
Definition~\ref{def:define_convergence_notion} is satisfied.
\end{enumerate}
\end{proof}

\begin{proposition} \label{pr:distance_son_perturbed}
Let $A,B, F \in \R^{n \times n}$ and assume that $A$ and $B$ are invertible. 
Then
\begin{align} \label{eq:distance_son_perturbed}
& \, \dist(F, SO(n))   \nonumber \\
   \le  & \,  (1 + |A^{-1} - \Id|) (1 + |B^{-1} - \Id|) \dist(AFB, SO(n))   \nonumber \\
&\,  + |A^{-1} - \Id| + |B^{-1} - \Id| +  |A^{-1} - \Id|  |B^{-1} - \Id|.
\end{align}
For $A = \diag(a^{-1}, 1, \ldots, 1)$, $B = \diag(b^{-1}, 1, \ldots, 1)$, $F = \diag(abc, 1, \ldots, 1)$,
with $a,b,c > 1$ equality holds.
\end{proposition}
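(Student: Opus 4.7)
The plan is to bound $\dist(F,SO(n))$ from above by $|F-Q|$ for a cleverly chosen rotation $Q$, and then to decompose $F-Q$ so that each piece can be controlled by one of the three terms on the right-hand side. The natural choice is to take $Q\in SO(n)$ minimizing $|AFB-Q|$, which exists by compactness of $SO(n)$; this turns the first summand of the claimed bound into a triangle-inequality term, and the remaining correction will produce the second and third summands.

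Concretely I would write, using that $A,B$ are invertible,
\[
F-Q \;=\; A^{-1}(AFB)B^{-1} - Q \;=\; A^{-1}(AFB-Q)B^{-1} \;+\; \bigl(A^{-1}QB^{-1} - Q\bigr),
\]
and then estimate the two pieces separately. For the first piece I invoke the inequality $|MXN|\le |M|_{op}|N|_{op}|X|$ (which holds because $|MX|\le|M|_{op}|X|$ and analogously on the right) together with the elementary bound $|A^{-1}|_{op}\le 1+|A^{-1}-\Id|_{op}\le 1+|A^{-1}-\Id|$ and the analogous bound for $B^{-1}$, yielding
\[
|A^{-1}(AFB-Q)B^{-1}| \;\le\; (1+|A^{-1}-\Id|)(1+|B^{-1}-\Id|)\,\dist(AFB,SO(n)).
\]
For the second piece I use the identity $A^{-1}QB^{-1}-Q = (A^{-1}-\Id)QB^{-1} + Q(B^{-1}-\Id)$ together with $|Q|_{op}=1$ and the same operator-vs-Frobenius estimate, obtaining $|A^{-1}QB^{-1}-Q|\le |A^{-1}-\Id|(1+|B^{-1}-\Id|)+|B^{-1}-\Id|$, which is exactly $|A^{-1}-\Id|+|B^{-1}-\Id|+|A^{-1}-\Id||B^{-1}-\Id|$. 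Adding the two bounds gives \eqref{eq:distance_son_perturbed}.

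For the equality claim with $A=\diag(a^{-1},1,\ldots,1)$, $B=\diag(b^{-1},1,\ldots,1)$, $F=\diag(abc,1,\ldots,1)$ and $a,b,c>1$, all matrices involved are diagonal with positive entries, so the polar decomposition shows that the nearest element of $SO(n)$ to any such matrix is $\Id$; hence $\dist(F,SO(n))=abc-1$ and $\dist(AFB,SO(n))=\dist(\diag(c,1,\ldots,1),SO(n))=c-1$, while $|A^{-1}-\Id|=a-1$ and $|B^{-1}-\Id|=b-1$. A direct computation gives $ab(c-1)+(a-1)+(b-1)+(a-1)(b-1)=abc-1$, confirming equality. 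The only real care needed in the proof is to invoke the (lossy) estimate $|\cdot|_{op}\le|\cdot|$ only on the perturbations $A^{-1}-\Id$ and $B^{-1}-\Id$ (not on $AFB-Q$), since the above equality case shows that the inequality is in general sharp and any further relaxation would destroy it.
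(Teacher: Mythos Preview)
Your proof is correct and is essentially the same argument as the paper's. The only cosmetic difference is that the paper first conjugates by $Q$ (setting $A_Q=Q^{-1}AQ$, $\bar F_Q=Q^{-1}F$) so that everything is compared to $\Id$ and the pure Frobenius submultiplicativity $|XY|\le|X|\,|Y|$ can be used throughout, whereas you keep $Q$ and use $|Q|_{op}=1$ together with the $O(n)$-invariance of the Frobenius norm; these are equivalent bookkeeping devices. One small wording point: for the term $(A^{-1}-\Id)QB^{-1}$ you need the variant $|XN|\le|X|\,|N|_{op}$ (applied to $X=(A^{-1}-\Id)Q$, $N=B^{-1}$, together with $|(A^{-1}-\Id)Q|=|A^{-1}-\Id|$) rather than the form $|MXN|\le|M|_{op}|N|_{op}|X|$ you quoted, since the latter would put the Frobenius norm on $Q$ and produce an unwanted $\sqrt n$.
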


\begin{proof} There exist $Q \in SO(n)$ such that $\dist(AFB, SO(n)) = |AFB - Q|$. 
Set $A_Q = Q^{-1} A Q$ and $\bar F_Q = Q^{-1} F$.  Then 
$ \dist(AFB, SO(n))  = |A_Q \bar F_Q B - \Id|$
and 
\begin{align}
|F-Q| = & \, |\bar F_Q  - \Id| \le  |\bar F_Q  - A_Q^{-1}  B^{-1}| + |A_Q^{-1} B^{-1} - \Id|   \nonumber \\
= & \, |A_Q^{-1}( A_Q \bar F_Q B - \Id) B^{-1}| +  |A_Q^{-1}B^{-1} - \Id|    
\end{align}
Now expand $B^{-1}$ and $A_Q^{-1}$ as  $B^{-1} =  \Id + (B^{-1} - \Id) $ and $A_Q^{-1} = Q^{-1} A^{-1} Q =   \Id + Q^{-1} (A^{-1} - \Id) Q$
and use that $|XY| \le |X| \, |Y|$ and $|Q^{-1} (A^{-1} - \Id) Q| = |A^{-1} - \Id|$.
\end{proof}

\begin{proof}[Proof of Theorem~\ref{th:main}~\eqref{it:main2} ($\Gamma-\liminf$ inequality)]
Let $\underline V =(V_1, \ldots, V_n)$ be a positively oriented orthonormal basis of $T_p \mathcal M$ and set 
$\tilde{ \underline V}_k = (Q_k V_1, \ldots, Q_k V_n)$. Then $\tilde{ \underline V}_k$ is a positively oriented orthonormal  basis 
of $T_{q_k} \tM$. Set $\varphi_k(x) = (\exp_p \circ \imath_{\underline V})(h_k x)$ and
 $\psi_k(x)= (\exp_{q_k} \circ \imath_{\tilde{\underline V}_k})(h_k x)$.
 Let $w_k$ be as in Definition~\ref{def:define_convergence_notion} 
and define
\begin{equation}  \label{eq:define_barwk_gammaliminf}
\bar w_k := \psi_k^{-1} \circ w_k \circ \varphi_k,  \qquad \bar E_k := \{ x:  u_k \circ \varphi_k(x)  \ne w_k \circ \varphi_k(x)\}.
\end{equation}
Then $\mathcal L^n( \bar E_k) \le C h_k^4$ and
\begin{equation}  \label{eq:lower_bound_Eh_wk}
 E_{h_k}(u_k) \ge \fint_{B_1(0)} 1_{B_1(0) \setminus \bar E_k}(x) \,  \dist^2(dw_k(\varphi_k(x)), SO(g, \tilde g)) \, d\Vol_{\varphi_k^*g}(x).
\end{equation}

Since the functions $w_k$ satisfy a uniform Lipschitz bound,  we can obtain the lower bound by expressing  $\dist^2(dw_k(\varphi_k(x)), SO(g, \tilde g))$ in normal coordinates at $p$ and $q_k$
and using Taylor expansion on the large set where $dw_k$ is close to $SO(g, \tilde g)$. Specifically,
  using  \eqref{eq:dwk_local_compactness} we get 
\begin{equation}  \label{eq:dwk_local_compactness2}
 \dist(dw_k (\varphi(x)), SO(g, \tilde g))  = \dist\left( \big(\tilde{\bar g}^{(k)}\big)^{1/2}(\bar w_k(x)) \, d\bar w_k \,   \big( \bar g^{(k)}\big)^{-1/2}(x), SO(n) \right),
 \end{equation}
where $\bar g^{(k)}$ is the metric obtained from $g$ by pullback under $\varphi_k$ and similarly for $\tilde{\bar g}$.
The expansion   \eqref{eq:metric_normal} of the metric in normal coordinates yields
\begin{eqnarray}
\bar g^{(k)}_{im}(x) &=& h_k^2 \left( \delta_{im} - \frac13   \sum_{j,l=1}^n g(p)(V_i, \mathcal R(p)(V_j, V_m, V_l) ) \, h_k^2  \, x^j x^l  + \mathcal O(h_k^3 |x|^3)  \right),\\
\tilde{\bar g}_{im}^{(k)}(y) &=&  h_k^2 \left( \delta_{im} - \frac13   \sum_{j,l=1}^n g(q_k)(Q_k V_i, \tR(q_k)(Q_kV_j,  Q_k V_m, Q_k V_l) ) \,  h_k^2 \,  y^j y^l \right.  \nonumber \\  
& &  \qquad  \left. + \,  \mathcal O(h_k^3 |y|^3)  \right).
\end{eqnarray}
Moreover,  it follows from the definition of $\bar w_k$ and $f_k$ that
$$ \bar w_k = \id + h_k^2  \, \imath_{\underline V}^{-1} \circ f_k \circ  \imath_{\underline V} +  \imath_{\underline V}^{-1}  c_k.$$
Now by   \eqref{eq:estimate_ck} we have $c_k \to 0$. Since $f_k$ is bounded in $L^2$ it follows that $\bar w_k \to \id$ in $L^2$. In view of the uniform Lipschitz bound on
$\bar w_k$ we see that $\bar w_k \to \id$ uniformly.
Thus 
\begin{equation}  \label{eq:define_Gk}
 G_k := \frac{ (\tilde{\bar g}^{(k)})^{1/2} \circ \bar w_k \,\,  d\bar w_k \, \,  (\bar g^{(k)})^{-1/2} - \Id}{h_k^2}  \rightharpoonup G  \quad \text{in $L^2(B_1(0); \R^{n \times n})$}
\end{equation}
with 
\begin{equation}   \label{eq:formual_limit_G}
G_{im}(x)  =  d(\imath_{\underline V}^{-1} \circ \check f \circ  \imath_{\underline V})(x)  -  \sum_{j,l=1}^n \mathcal A^i_{jml} x^j x^l, 
\quad \text{and} \quad \mathcal A^i_{jml} =  \frac16  g(p)(V_i, (\mathcal R(p) - \tR^Q)(V_j, V_m, V_l)).
\end{equation}

Now  set $F_k := \{ x \in B_1(0) : |h_k^2 G_k| > h_k \}$ 
and for $x \notin F_k$ use the  Taylor expansion 
$$\dist^2(\Id + h_k^2 G_k, SO(n)) = |\sym h_k^2 G_k|^2 + \mathcal O(h_k) |h_k^2 G_k|^2.$$
By \eqref{eq:metric_normal} we have $d\Vol_{\varphi_k^*g} = h^n (1 + \mathcal O(h_k^2)) \mathcal L^n$.
 Using that $\mathcal L^n(\bar E_k \cup F_k) \to 0$ and   that positive semidefinite quadratic
forms are weakly lower semi-continuous,  we deduce that
\begin{eqnarray} 
  & & \liminf_{k \to \infty} \frac{1}{h_k^4} \fint_{B_1(0)}  (1_{B_1(0) \setminus \bar E_k)} \dist^2( \Id + h_k^2 G_k)  \, d\Vol_{\varphi^*g} \nonumber \\
& \ge & \liminf_{k \to \infty} \fint_{B_1(0)}  |(1_{B_1(0) \setminus (\bar E_k \cup F_k)} \sym G_k|^2 \, dx \nonumber \\
& \ge & \fint_{B_1(0)} |\sym G|^2 \, dx.   \label{eq:weaklsc}
\end{eqnarray}
Now the assertion follows from   \eqref{eq:lower_bound_Eh_wk},  \eqref{eq:dwk_local_compactness2},
 \eqref{eq:define_Gk}, 
\eqref{eq:formual_limit_G} and \eqref{eq:weaklsc}.
\end{proof}

\begin{proof}[Proof of Theorem~\ref{th:main}~\eqref{it:main3} (recovery sequence)]
Let  $ q \in \tM$,  $Q \in SO(n)(T_p \mathcal M, T_q \tM)$ and let $\check f \in W^{1,2}(B_1(0), T_p \mathcal M)$ be a representative of $f$. There exists Lipschitz maps
$\check f_k$ such that $\check{f}_k \to \check f$ in $W^{1,2}$ and $\Lip \check f_k \le h_k^{-1}$.
Set  $c_k = - h_k^2 \fint_{B_1(0)} \check f_k$ and define
\begin{equation} \label{eq:define_recovery}
 u_k(\exp_p (h_k X)) :=  w_k(\exp_p (h_k X)) := \exp_q (h_k Q (X + h_k^2 \check f_k(X) +   c_k)).
 \end{equation}
 Then \eqref{eq:convergence_condition_1}--\eqref{eq:convergence_condition_2}  hold,   and   the definition 
  \eqref{eq:define_qk} of $q_k$ in combination with the definition 
  of $c_k$ implies  that $q_k = q$. 
 The definition \eqref{eq:define_fk} of $f_k$ with the choice $Q_k = Q$  yields
$ f_k =  \check f_k$. Thus $u_k \to (q,Q,f)$. 

To show convergence of the rescaled energy,  we define $G_k$ as in  \eqref{eq:define_Gk} and \eqref{eq:define_barwk_gammaliminf}, with the frame $\underline{\tilde V}_k = (QV_1, \ldots, QV_n)$ in the target space
(recall that $q_k = q$ and $Q_k = Q$).
Then $G_k \to G$ in $L^2$ (strongly), with $G$ given by    \eqref{eq:formual_limit_G}.
Since $\dist^2(F, SO(n)) \le C|F - \Id|^2$  and $\int_{|G_k| \ge h_k^{-1}} |G_k|^2 \, dx \to 0$, Taylor expansion shows that 
$$  \lim_{k \to \infty} \frac{1}{h_k^4} \fint_{B_1(0)}  \dist (\Id + h_k^2 G_k, SO(n)))   \, dx = \fint_{B_1(0)} |\sym G|^2 \, dx.$$
In view of  \eqref{eq:dwk_local_compactness2} and the choice $u_k = w_k$,  we get the desired assertion.
\end{proof}

\section{Convergence of the energy}  \label{se:energy}

It is easy to see that the quadratic functional $f \mapsto \mathcal I^{q,Q}(f)$ attains its minimum 
in $W^{1,2}(B_1(0), T_p \mathcal M)$. Set
\begin{equation}
m^{q,Q} := \min_{ f \in W^{1,2}(B_1(0), T_p \mathcal M)} \mathcal I^{q,Q}(f).
\end{equation}

\begin{theorem} Let $\tM$ be compact. Then 
\begin{equation}  \label{eq:main_convergence}
\lim_{h \to 0}  \frac{1}{h^4}  \inf_{ u \in W^{1,2}(B_h(p); \tM)} E_{B_h(p)}(u) =  \bar m := \min_{q \in \tM}   \, \, \min_{Q \in SO(T_p \mathcal M, T_q\tM)} m^{q,Q}.
\end{equation}
\end{theorem}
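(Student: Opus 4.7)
The plan is to obtain this as a standard consequence of the compactness and $\Gamma$-convergence statements in Theorem~\ref{th:main}. One preliminary point requires attention: the minimum defining $\bar m$ must actually be attained. The total space $E := \bigsqcup_{q \in \tM} SO(T_p \mathcal M, T_q \tM)$ is an $SO(n)$-bundle over the compact manifold $\tM$, hence compact. In a smooth orthonormal frame $\tilde{\underline V}(q)$ around any point of $\tM$, the pullback tensor $\tR^Q$ depends continuously on $(q, Q) \in E$, and since $m^Q$ is the minimum over the Hilbert quotient $W^{1,2}(B_1(0); T_p \mathcal M)/\!\!\sim$ of a quadratic functional whose coefficients depend continuously on $\tR^Q - \mathcal R(p)$, the map $(q,Q) \mapsto m^Q$ is continuous. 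Compactness of $E$ then yields a minimizer $(\bar q, \bar Q)$.

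For the upper bound, I would pick a minimizer $\bar f$ of $\mathcal I^{\bar Q}$ in $W^{1,2}(B_1(0); T_p \mathcal M)/\!\!\sim$ (which exists because $\mathcal I^{\bar Q}$ is a nonnegative quadratic functional on a Hilbert space after passing to the quotient). Given any sequence $h_k \to 0$, I would apply Theorem~\ref{th:main}(\ref{it:main3}) to produce a recovery sequence $u_k$ converging to $(\bar q, \bar Q, \bar f)$ with $h_k^{-4} E_{B_{h_k}(p)}(u_k) \to \mathcal I^{\bar Q}(\bar f) = m^{\bar Q} = \bar m$. Since each $u_k$ is admissible in the infimum, this immediately gives
\[
\limsup_{h \to 0} \frac{1}{h^4} \inf_{u \in W^{1,2}(B_h(p); \tM)} E_{B_h(p)}(u) \le \bar m.
\]

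For the lower bound, I would select, for each small $h$, a near-optimal map $u_h \in W^{1,2}(B_h(p); \tM)$ with $E_{B_h(p)}(u_h) \le \inf E_{B_h(p)} + h^5$. The upper bound just established shows $E_{B_h(p)}(u_h) \le C h^4$ for all sufficiently small $h$, so for any sequence $h_k \to 0$ Theorem~\ref{th:main}(\ref{it:main1}) extracts a subsequence (not relabeled) with $u_{h_k} \to (q_*, Q_*, f_*)$, and the $\Gamma$-liminf inequality of Theorem~\ref{th:main}(\ref{it:main2}) yields
\[
\liminf_{k \to \infty} \frac{1}{h_k^4} E_{B_{h_k}(p)}(u_{h_k}) \ge \mathcal I^{Q_*}(f_*) \ge m^{Q_*} \ge \bar m.
\]
Applying this to any sequence $h_k \to 0$ realizing the $\liminf$ of $h^{-4} \inf E_{B_h(p)}$ finishes the argument.

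The main (and quite minor) obstacle is the continuity and attainment statement for $\bar m$: because $Q$ ranges over a nontrivial bundle rather than a fixed copy of $SO(n)$, one has to phrase continuity through local trivializations of $T\tM$, exactly in the spirit of the argument already used in the proof of Lemma~\ref{le:convergence_welldefined}. Once this is set up, the rest of the proof is entirely formal and follows the textbook template for deducing convergence of minima from a $\Gamma$-convergence plus compactness result.
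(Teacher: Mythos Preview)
Your proposal is correct and follows essentially the same route as the paper: establish continuity of $(q,Q)\mapsto m^Q$ on the compact bundle $SO(T_p\mathcal M, T\tM)$ to secure attainment of $\bar m$, then deduce the upper bound from the recovery sequence and the lower bound from compactness plus the $\Gamma$-$\liminf$ inequality. The only cosmetic differences are that you fix the optimal $(\bar q,\bar Q)$ before applying the recovery sequence (the paper applies it for arbitrary $Q$ and optimises afterwards) and that you make the near-minimiser selection and the energy bound $E_{B_h(p)}(u_h)\le C h^4$ explicit, which the paper leaves implicit.
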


\begin{proof} This is a standard consequence of Theorem~\ref{th:main}. We include the details for the convenience of the reader.

 It is easy to see that the map ${q,Q} \mapsto m^{q,Q}$ is continuous as a map from the subbundle $SO(T_p \mathcal M, T\tM) \subset T\tM \otimes T_p^* \mathcal M$ to $\R$.
Since $\tM$ is compact, so is $SO(T_p \mathcal M, T\tM)$. Thus the minimum on the right hand side of \eqref{eq:main_convergence} exists.

Upper bound: set
$L^+ = \limsup_{h \to 0}  h^{-4}  \inf_{ u \in W^{1,2}(B_h(p); \tM)} E_{B_h(p)}(u)$ and let $h_k \to 0$ be a subsequence along which the limit superior
is realised.  Let  $q \in \tM$, $ Q \in SO(T_p \mathcal M, T_q \tM)$,  and let $f$ be a  minimiser of $\mathcal I^{q,Q}$. It follows from  Theorem~\ref{th:main}~\eqref{it:main3} that
$L^+ \le m^{q,Q}$.  Optimising over $Q$ and $q$, we get $L^+ \le \bar m$.

Lower bound: set $L^- = \liminf_{h \to 0}  h^{-4}  \inf_{ u \in W^{1,2}(B_h(p); \tM)} E_{B_h(p)}(u)$ and let $h_k \to 0$ be a subsequence which realises the 
limit inferior. Then there exist maps $u_k$ such that 
$$ \lim_{k \to \infty} \frac{1}{h_k^4}  E_{B_{h_k}(p)}(u_k) = L^-.$$
By Theorem~\ref{th:main}~\eqref{it:main1} there exists a subsequence $u_{k_j}$ which converges to $(q,Q,f)$ in the 
sense of Definition~\ref{def:define_convergence_notion}.
Thus Theorem~\ref{th:main}~\eqref{it:main2} implies that
$ L^- \ge \mathcal I^{q,Q}(f) \ge m^{q,Q} \ge \bar m$.
\end{proof}

A slight modification of the arguments in the proof of  Theorem~\ref{th:main} yields the following extension for non-compact targets.

\begin{corollary} \label{co:noncompact_target}
Suppose that $\tM$ is complete and satisfies the following uniform regularity condition: there exists a $\rho > 0$
such that the injectivity radius satisfies   $\inj(q) \ge \rho$ for all $q \in \tM$  and the the pullback metrics $\exp_q^* g$ are uniformly bounded in $C^3(B_\rho(0))$. 
Then 
\begin{equation}  \label{eq:main_convergence_noncompact}
\lim_{h \to 0}  \frac{1}{h^4}  \inf_{ u \in W^{1,2}(B_h(p); \tM)} E_{B_h(p)}(u) =  \inf_{q \in \tM}   \, \, \min_{Q \in SO(T_p \mathcal M, T_q\tM)} m^{q,Q}.
\end{equation}
\end{corollary}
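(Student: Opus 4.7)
The plan is to revisit the proof of Theorem~\ref{th:main} and identify the two places where compactness of $\tM$ was used essentially: the Lipschitz approximation (Step~1 of part (i)) and the extraction of a limit point $q$ (Step~2 of part (i)). I would replace both by arguments using the uniform regularity hypothesis, and then note that the liminf inequality passes through even when the base points $q_k$ do not converge.

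For the upper bound $L^+ \le \inf_q \min_Q m^Q$, nothing has to be changed: given $\varepsilon > 0$, pick $q \in \tM$ and $Q \in SO(T_p\mathcal{M}, T_q\tM)$ with $m^Q \le \inf_{q',Q'} m^{Q'} + \varepsilon$, take a minimizer $f$ of $\mathcal I^Q$, and apply the recovery sequence construction of Theorem~\ref{th:main}(iii). That construction only uses $\exp_q$ and $\exp_p$ on small balls, so it goes through verbatim.

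For the lower bound, consider maps $u_k$ with $h_k^{-4} E_{B_{h_k}(p)}(u_k) \to L^-$. The Lipschitz approximation of Step~1 in the proof of Theorem~\ref{th:main}(i) first produces an $\R^s$-valued Lipschitz map $\hat v_k$ with $\dist(\hat v_k, \tM) \le C h_k$ and $\hat v_k(x') \in \tM$ for some $x'$; then it projects via a nearest-point map $\pi_{\tM}$. In the non-compact setting I would replace $\pi_{\tM}$ by the intrinsic retraction $\exp_{q_k^\ast} \circ P \circ \exp_{q_k^\ast}^{-1}$, where $q_k^\ast = \hat v_k(x')$ and $P: T_{q_k^\ast}\R^s \to T_{q_k^\ast}\tM$ is orthogonal projection; by the uniform bound on $\inj(q)$ and on $\exp_q^\ast g$ this retraction is defined and Lipschitz on a ball of radius $\rho/2$ around $q_k^\ast$, with constants independent of $k$. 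This yields $w_k$ satisfying \eqref{eq:lipwk}--\eqref{eq:uk_not_wk} as before. Setting $q_k$ as in \eqref{eq:define_qk} (with $\exp_q$ replaced by $\exp_{q_k^\ast}$), we have $w_k(B_{h_k}(p)) \subset B_{C h_k}(q_k)$, and the uniform regularity of $\tM$ lets us repeat the rigidity argument of Step~2 in normal coordinates at $q_k$ to obtain $\bar Q_k \in SO(n)$ and $\bar c_k \in \R^n$ for which $\bar f_k := h_k^{-2}(\bar Q_k^{-1}\bar w_k - \id - \bar c_k)$ is bounded in $W^{1,2}(B_1(0); \R^n)$.

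The main obstacle is that $q_k$ need not converge, so we cannot directly invoke Theorem~\ref{th:main}(ii). Here the uniform $C^3$ bound pays off: the Riemann tensor of $\tM$ is uniformly bounded, so the tensor $\mathcal S_k \in T_p\mathcal M \otimes (T_p^\ast \mathcal M)^{\otimes 3}$ defined by $\mathcal S_k(X,Y,Z) = \bar Q_k^{-1} \tR(q_k)(\bar Q_k X, \bar Q_k Y, \bar Q_k Z)$ has bounded components, and after extracting a further subsequence we may assume $\mathcal S_k \to \mathcal T$ and $\bar f_k \rightharpoonup \check f$. Now carry out the Taylor expansion in the proof of Theorem~\ref{th:main}(ii) with $\tR(q_k)$ and $\bar Q_k$ in place of $\tR(q)$ and $Q$: the tensor appearing in $G_k$ is precisely $\mathcal S_k$, which converges to $\mathcal T$, so weak lower semicontinuity gives
\begin{equation*}
L^- \ge \fint_{B_1(0)} \Bigl| \sym d\check f(X) - \tfrac{1}{6}\bigl(\mathcal R(p)(X,\cdot,X) - \mathcal T(X,\cdot,X)\bigr) \Bigr|^2 \, d\Vol_{g(p)}(X) =: \mathcal J^{\mathcal T}(\check f).
\end{equation*}
Finally, the continuous dependence of $\min_f \mathcal J^{\mathcal S}(f)$ on the tensor $\mathcal S$, combined with $\mathcal S_k \to \mathcal T$, gives $\min_f \mathcal J^{\mathcal T}(f) = \lim_k m^{\bar Q_k}$ at the points $q_k$, so
\begin{equation*}
L^- \ge \mathcal J^{\mathcal T}(\check f) \ge \min_f \mathcal J^{\mathcal T}(f) = \lim_k m^{\bar Q_k} \ge \inf_{q \in \tM}\, \min_{Q \in SO(T_p\mathcal M, T_q\tM)} m^Q,
\end{equation*}
which concludes the lower bound.
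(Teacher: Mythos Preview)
The paper does not actually give a proof of this corollary; it only remarks that ``a slight modification of the arguments in the proof of Theorem~\ref{th:main}'' suffices. Your sketch is therefore already more detailed than what the paper provides, and the overall strategy is exactly what the authors presumably have in mind: the recovery sequence needs no change, and for the lower bound you correctly isolate the two uses of compactness (the nearest-point projection $\pi_{\tM}$ in Step~1, and the extraction of a limit point $q$ in Step~2). Your handling of the second obstacle --- passing to a subsequence along which the pulled-back curvature tensors $\mathcal S_k$ converge to some $\mathcal T$, carrying the Taylor expansion of Theorem~\ref{th:main}(ii) through with $\mathcal S_k$ in place of $\tR^Q$, and then using continuity of $\mathcal S\mapsto\min_f\mathcal J^{\mathcal S}(f)$ together with $m^{Q_k}\ge\inf_{q,Q}m^Q$ --- is a clean and correct way to bypass the lack of a limiting base point.

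There is, however, one genuine technical gap: your retraction $\exp_{q_k^\ast}\circ P\circ\exp_{q_k^\ast}^{-1}$ is not well defined as written. If $\exp_{q_k^\ast}^{-1}$ denotes the inverse Riemannian exponential of $\tM$, it is only defined on $\tM$ and cannot be applied to $\hat v_k(x)\in\R^s\setminus\tM$; if instead you intend the Euclidean shift $y\mapsto y-q_k^\ast$, then the map $y\mapsto\exp_{q_k^\ast}\bigl(P(y-q_k^\ast)\bigr)$ does \emph{not} restrict to the identity on $\tM$ (the chord $y-q_k^\ast$ for $y\in\tM$ is not in general tangent at $q_k^\ast$), so the resulting $w_k$ would differ from $u_k$ even on the good set and \eqref{eq:uk_not_wk} would fail. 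A fix in the spirit of your argument: on the good set $G_k=\{\hat v_k=\hat u_k\}$ the map $\hat u_k$ takes values in $B^{\tM}_{Ch_k}(q_k^\ast)$ and is uniformly Lipschitz there, so $z_k:=\exp_{q_k^\ast}^{-1}\circ\hat u_k|_{G_k}$ is a uniformly Lipschitz map into $B_{Ch_k}(0)\subset T_{q_k^\ast}\tM\cong\R^n$ (the uniform bound on $\exp_q^\ast\tilde g$ in $C^3$ gives the uniform Lipschitz constant); extend $z_k$ to all of $B_{h_k}(0)$ by Kirszbraun with the same Lipschitz constant, and set $\hat w_k:=\exp_{q_k^\ast}\circ z_k$. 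This $\hat w_k$ is uniformly Lipschitz, $\tM$-valued, and agrees with $\hat u_k$ on $G_k$, so both \eqref{eq:lipwk} and \eqref{eq:uk_not_wk} hold and the rest of your argument goes through unchanged.
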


\section*{Acknowledgements} 
The authors thank Cy Maor for very helpful suggestions and for pointing out reference \cite{kupferman_maor_asaf_reshetnyak}.
This work is an extension of the first author's B.Sc.~thesis at the University of Bonn. 
In that thesis a recovery sequence is constructed, and  compactness and the $\Gamma-\liminf$ inequality are shown
under the additional hypothesis that the original sequence $u_k$ satisfies a uniform Lipschitz bound and $u_k(p)$ is fixed.
The second author has been supported by the Deutsche Forschungsgemeinschaft (DFG, German Research Foundation) through
the Hausdorff Center for Mathematics (GZ EXC 59 and 2047/1, Projekt-ID 390685813) and the 
collaborative research centre  {\em The mathematics of emerging effects} (CRC 1060, Projekt-ID 211504053).


\begin{thebibliography}{9999}

\bibitem{aharoni16} H. Aharoni, J.M. Kolinski, M. Moshe, I. Meirzada, I., and E.  Sharon,  Internal stresses 
lead to net forces and rorques on extended elastic  bodies, Physical Review Letters \textbf{117} (2016), 124101.

\bibitem{CvS16} A. Convent and J. Van Schaftingen, Intrinsic co-local weak derivatives and Sobolev spaces between manifolds, Ann. Sc. Norm. Super. Pisa Cl. Sci. \textbf{16 (5)} (2016), 97--128.

\bibitem{evans_gariepy92} L.C. Evans and R.F. Gariepy, Measure theory and fine properties of functions, CRC Press,  1992.

\bibitem{fjm02} G. Friesecke, R.D. James, and S. M\"uller, A theorem on geometric rigidity and the derivation of nonlinear plate theory from three-dimensional elasticity,
Comm. Pure Appl. Math. \textbf{55} (2002), 1461--1506.


\bibitem{HKST} J. Heinonen, P. Koskela, N.Shanmugalingam, and J.T. Tyson, Sobolev spaces on metric measure spaces -- an approach based on upper gradient, 
Cambridge Univ. Press, 2015.

 \bibitem{karcher77} H. Karcher, Riemannian center of mass and mollifier smoothing, Comm. Pure Appl. Math. \textbf{30} (1977), 509--541.
 
 \bibitem{kupferman_maor_asaf_reshetnyak} R. Kupferman, C. Maor, A. Shachar, Reshetnyak rigidity for Riemannian manifolds, 
 Arch. Rat. Mech. Anal. \textbf{231} (2019), 367--408.
 
\bibitem{liu77} F.-C. Liu, A Luzin type property of Sobolev functions, Indiana Univ. Math. J. \textbf{26} (1977), 645--651.


\bibitem{maor_shachar19} C. Maor  and A.  Shachar, On the role of curvature in the elastic energy of non-Euclidean thin bodies. J. Elasticity, \textbf{134} (2019), 
149--173. 

\bibitem{nash56} J. Nash, The imbedding problem for Riemannian manifolds, Ann. Math. \textbf{63} (1956), 20--63.

\bibitem{reshetnyak97} Y.G. Reshetnyak, Y. G., Sobolev classes of functions with values in a metric space, Sib. Math. Journal \textbf{38} (1997), 567--583.


\end{thebibliography}
\end{document}